\providecommand{\abs}[1]{\lvert#1\rvert}
\providecommand{\norm}[1]{\lVert#1\rVert}
\newtheorem{theorem}{Theorem}[section]
\newtheorem{proposition}[theorem]{Proposition}
\theoremstyle{definition}
\newtheorem{remark}[theorem]{Remark}
\theoremstyle{definition}
\theoremstyle{remark}
\def\X{\mathcal{X}}
\newcommand{\R}{\mathbb R}
\newcommand{\vol}{\operatorname{vol}}
\newcommand{\diam}{\operatorname{diam}}
\newcommand{\conv}{\operatorname{conv}}
\newcommand{\tr}{\operatorname{tr}}
\newcommand{\Id}{\operatorname{Id}}
\newcommand{\ls}{\leqslant}
\newcommand{\gr}{\geqslant}
\renewcommand{\epsilon}{\varepsilon}
\begin{document}
\title[Continuous quantitative Helly-type results]{Continuous quantitative Helly-type results}


\author{Tom\'as Fernandez Vidal}
\author{Daniel Galicer}
\author{Mariano Merzbacher}

\address{ Departamento de Matem\'{a}tica - IMAS-CONICET,
Facultad de Cs. Exactas y Naturales  Pab. I, Universidad de Buenos Aires
(1428) Buenos Aires, Argentina}
\email{tfvidal@dm.uba.ar}
\email{dgalicer@dm.uba.ar} \email{mmerzbacher@dm.uba.ar} 
\keywords{Helly-type results, convex bodies, approximate John's decomposition}

 \subjclass[2020]{52A35, 52A23, 52A38 (primary), 52A40 (secondary)}

\begin{abstract}

Brazitikos' results on quantititative Helly-type theorems (for the volume and for the diameter) rely on the work  of Srivastava on sparsification of John's decompositions. We change this technique by a stronger recent result due to Friedland and Youssef. This, together with an appropriate selection in the accuracy of the approximation, allow us to obtain Helly-type versions which are sensitive to the number of convex sets involved.
\end{abstract}

\maketitle

\section{Introduction}

Helly's classical result \cite{helly1923mengen} states that if $\mathcal C = \{ C_i : i \in I \}$ is a finite family of at least $n+1$ convex sets in $\R^n$ and if any $n+1$ members of $\mathcal C$ have non-empty intersection then $\bigcap_{i \in I} C_i$ is non-empty.
In general, a Helly-type property  is a property  $\Pi$ for which there exists a number $s\in {\mathbb N}$ such that if $\{ C_i:i\in I\}$ is a finite family of certain objects and every subfamily of $s$ elements fulfills $\Pi $, then the whole family fulfills  $\Pi $.

In the eighties, B\'{a}r\'{a}ny, Katchalski and Pach proved the following quantitative ``volume version" of Helly's theorem \cite{barany1982quantitative,barany1984helly}:
\bigskip

{\sl
Let ${\mathcal C}=\{ C_i:i\in I\}$ be a finite family of convex sets in ${\mathbb R}^n$. If the intersection of
any $2n$ or fewer members of ${\mathcal H}$ has volume greater than or equal to $1$, then
$\vol(\bigcap_{i\in I} C_i)\geq d(n)$, where $d(n)>0$ is a constant depending only on $n$.
}

\bigskip
Thus, the previous result express the fact that \emph{``the intersection has large volume''} is a Helly-type property for the family of convex sets.

Since every (closed) convex set is the intersection of a family of closed half-spaces; a simple compactness argument (see \cite{barany1982quantitative}) shows that one can remove the restriction that ${\mathcal C}$ is finite and also  assume that each convex set is a closed half-space i.e.,
$$\{x \in \R^n : \langle x, v_i\rangle \leq 1\},$$
for some vector $v_i \in \R^n$.
Therefore, the theorem of B\'{a}r\'{a}ny et. al. is  equivalent to the following statement:
\bigskip

{\sl
Let ${\mathcal H}=\{ H_i:i\in I\}$ be a family of closed half-spaces in ${\mathbb R}^n$
such that $\vol(\bigcap_{i\in I}H_i)=1$. There exist $s\leq 2n$
and $i_1,\ldots ,i_s\in I$ such that
\begin{align*} 
\vol(H_{i_1}\cap \cdots \cap H_{i_s})^{1/n} \leq c(n),
\end{align*}
where $c(n)>0$ is a constant depending only on $n$.
}
\begin{figure}
    \centering
   \begin{tikzpicture}
\begin{scope}[scale=0.5]
 \path[fill=gray!60] (0,0) coordinate(p1) --  ++(31:1.5) coordinate(p2) -- ++(15:2) coordinate(p3) -- ++(-21:1.7) coordinate(p4) -- ++(-31:2) coordinate(p5) -- ++ (-41:1.1) coordinate (p6) -- ++ (-80:2) coordinate(p7) -- 
 ++(-100:2) coordinate(p8) --  ++(-125:1.5) coordinate(p9) -- ++ (190:2) coordinate(p10) -- ++(180:2.1)coordinate(p11) -- ++(170:3) coordinate(p12) -- ++ (145:2) coordinate(p13)-- ++ (110:2) coordinate(p14)-- ++ (56:2) coordinate(p15);
 \foreach \X [count=\Y] in {2,...,16}
 {\ifnum\X=16
   \path (p\Y) -- (p1) coordinate[pos=-0.0](a\Y) coordinate[pos=0](a1)
   coordinate[pos=0.5](m1);
   \draw (p\Y) -- (p1);
  \else
   \path (p\Y) -- (p\X) coordinate[pos=-0.0](a\Y) coordinate[pos=0](a\X)
   coordinate[pos=0.5](m\X);
   \draw (p\Y) -- (p\X);
  \fi}
  
  \end{scope}
  \begin{scope}[opacity=0.5,scale=0.5]
 \path (0,0) coordinate(p1) --  ++(31:1.5) coordinate(p2) -- ++(15:2) coordinate(p3) -- ++(-21:1.7) coordinate(p4) -- ++(-31:2) coordinate(p5) -- ++ (-41:1.1) coordinate (p6) -- ++ (-80:2) coordinate(p7) -- 
 ++(-100:2) coordinate(p8) --  ++(-125:1.5) coordinate(p9) -- ++ (190:2) coordinate(p10) -- ++(180:2.1)coordinate(p11) -- ++(170:3) coordinate(p12) -- ++ (145:2) coordinate(p13)-- ++ (110:2) coordinate(p14)-- ++ (56:2) coordinate(p15);

\path (p1) -- (p2) coordinate[pos=-4.5](a1) coordinate[pos=4.0](a2)
   coordinate[pos=0.5](m2);
 \path (p5) -- (p6) coordinate[pos=-4.7](a5) coordinate[pos=3.2](a6)
   coordinate[pos=0.5](m6);
   \path (p7) -- (p8) coordinate[pos=-1.5](a7) coordinate[pos=2.0](a8)
   coordinate[pos=0.5](m8);
     \path (p9) -- (p10) coordinate[pos=-1.2](a9) coordinate[pos=3.5](a10)
   coordinate[pos=0.5](m10);
     \path (p12) -- (p13) coordinate[pos=-1.7](a12) coordinate[pos=3.0](a13)
   coordinate[pos=0.5](m13);
 \coordinate (i1) at (intersection of a1--a2 and a5--a6);
 \coordinate (i2) at (intersection of a5--a6 and a7--a8);
 \coordinate (i3) at (intersection of a7--a8 and a9--a10);
 \coordinate (i4) at (intersection of a9--a10 and a12--a13);
 \coordinate (i5) at (intersection of a12--a13 and a1--a2);
\coordinate (i6) at (intersection of a13--a14 and a1--a2);
\path[draw,fill=gray!60] (i1)--(i2)--(i3)--(i4)--(i5)--(i1);
\end{scope}
\begin{scope}[scale=0.5]
\node at (-1,-3.5) {$\bigcap\limits_{i\in I} H_i$};
\node at (6,-7.5) {$H_{i_1} \cap\dots \cap H_{i_s}$};
\end{scope}

\end{tikzpicture}
    \caption{A convex body defined as the intersection of half-spaces which is enclosed by a convex set given by the intersection of a few of them.}
    \label{fig:helly}
\end{figure}
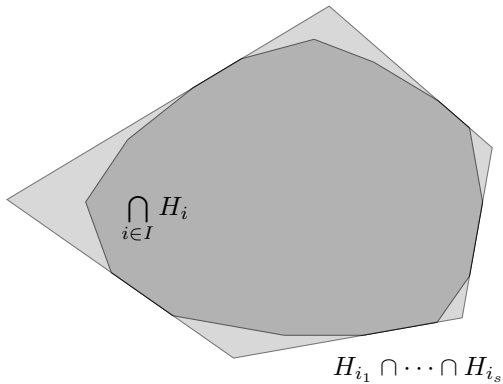

\bigskip

Of course  one cannot replace $2n$ by $2n-1$ in the statement above. Indeed, the cube $[-1/2,1/2]^n$ in ${\mathbb R}^n$ can be written as the intersection of the $2n$ closed half-spaces $$H_j^{\pm }:=\left\{ x:\langle x,\pm \frac{1}{2} e_j\rangle \leq 1 \right\}$$
and that the intersection of any $2n-1$ of these half-spaces has infinite volume.

The authors of \cite{barany1982quantitative} gave the bound $c(n) \leq n^{2n}$ for the constant $c(n)$ and conjectured that one might actually
have polynomial growth i.e., $c(n)\leq n^{d}$ for an absolute constant $d>0$.  Nasz\'{o}di \cite{naszodi2016proof} has verified this conjecture; namely, he proved
that $c(n) \leq c n^{2}$, where $c>0$ is an absolute constant. A clever but slight refinement of Nasz\'{o}di's argument, due to Brazitikos \cite[Theorem 3.1.]{brazitikos2017brascamp}, leads to the exponent $\frac{3}{2}$ instead of $2$.

Moreover, Brazitikos showed in \cite[Theorem 1.4.]{brazitikos2017brascamp} that if we relax the condition on the number $s$ of half-spaces that we use (but still require that it is proportional to the dimension) one can improve significantly the estimate, giving a bound of order $n$.

\begin{theorem}{\cite[Theorem 1.4.]{brazitikos2017brascamp}}\label{teo: volume brazitikos}
 There exists an absolute constant $\alpha>0$ with the following property: for every family ${\mathcal H}=\{ H_i:i\in I\}$ of closed half-spaces in ${\mathbb R}^n$,
$$
H_i = \{x \in \R^n: \langle x, v_i\rangle \leq 1\},
$$
with $\vol(\bigcap _{i\in I}H_i)=1$, there exist $s \leq \alpha n$ and $i_1, \dots, i_s \in I$ such that
\begin{align*}
\vol(H_{i_1}\cap \cdots \cap H_{i_s})^{1/n} \leq cn,
\end{align*}
where $c>1$ is an absolute constant.
\end{theorem}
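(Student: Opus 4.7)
The plan is to exploit the affine invariance of the statement, place the body in (approximate) John position, extract $O(n)$ defining half-spaces via a sparsification of the corresponding John decomposition, and conclude through a diameter estimate for the resulting polytope.

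First, since the quantity $\vol(H_{i_1}\cap\cdots\cap H_{i_s})/\vol(\bigcap_{i\in I}H_i)$ is invariant under invertible affine maps, I may assume that $K:=\bigcap_{i\in I}H_i$ is in John's position, i.e.\ $B_2^n\subset K\subset nB_2^n$, after rescaling the normals $v_i$ accordingly. Because the family $\{v_i\}_{i\in I}$ need not include genuine contact points of $K$ with $B_2^n$, I then apply an \emph{approximate} form of John's theorem to the given half-space presentation: for any $\epsilon>0$ there exist finitely many $v_{i_1},\ldots,v_{i_N}$ with $|v_{i_j}|\in(1-\epsilon,1]$ and positive weights $c_j$ such that
\[
\Bigl\|\sum_{j=1}^{N}c_j\,v_{i_j}\otimes v_{i_j}-\Id\Bigr\|\le\epsilon\qquad\text{and}\qquad\Bigl|\sum_{j=1}^{N}c_j v_{i_j}\Bigr|\le\epsilon.
\]

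Next, instead of Srivastava's sparsification used by Brazitikos, I invoke the Friedland--Youssef sparsification theorem (this is the key replacement advertised in the abstract): it produces a subset $J\subset\{1,\ldots,N\}$ with $|J|\le\alpha n$, for an absolute $\alpha>0$, together with new positive weights $d_j$ satisfying
\[
\beta\,\Id\preceq\sum_{j\in J}d_j\,v_{i_j}\otimes v_{i_j}\preceq\gamma\,\Id,
\]
for absolute $\beta,\gamma>0$. Set $P:=\bigcap_{j\in J}H_{i_j}\supset K$, so that $B_2^n\subset P$ and the normals of $P$ still admit an approximate decomposition of the identity. Writing $t_j:=\langle x,v_{i_j}\rangle$ for $x\in P$ one has $t_j\le 1$, and the spectral lower bound gives $\beta|x|^2\le\sum_{j\in J}d_j t_j^2$. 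Splitting $t_j=t_j^+-t_j^-$ and using the (preserved or compensated) centering $\sum_{j\in J}d_j v_{i_j}\approx 0$, the positive part contributes $\sum d_j(t_j^+)^2\le\sum d_j t_j^+\lesssim n$, while the negative part gives $\sum d_j(t_j^-)^2\le\max_j|t_j|\cdot\sum d_j t_j^-\lesssim|x|\,n$. The resulting quadratic inequality forces $|x|\le Cn$, hence $P\subset Cn\,B_2^n$; comparing with $\vol(K)\ge\vol(B_2^n)$ yields $\vol(P)^{1/n}/\vol(K)^{1/n}\le cn$, which, since $\vol(K)=1$, is the desired bound.

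The main obstacle is that sparsification results such as Friedland--Youssef are tailored to preserve the spectral decomposition of the identity but do not automatically preserve the centering condition $\sum d_j v_{i_j}\approx 0$ needed in the diameter step; this is precisely where the ``appropriate selection in the accuracy of the approximation'' mentioned in the abstract enters. One must tune $\epsilon$ in the approximate John decomposition (possibly lifting to $\R^{n+1}$ via the standard trick $v_{i_j}\leadsto(v_{i_j},1)$ that encodes centering as an extra coordinate of the identity) small enough that the centering remains effective after sparsification, while simultaneously keeping $|J|\le\alpha n$ with $\alpha$ absolute. Balancing these two requirements to yield a single absolute constant $c$ in the final $\vol^{1/n}$-estimate is the delicate point of the argument.
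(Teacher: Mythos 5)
Your overall architecture (John position, Friedland--Youssef sparsification, a Carath\'eodory-type correction of the barycenter) matches the paper's, but the final step --- deducing the volume bound from a containment of the sparsified polytope in $Cn\,B_2^n$ --- cannot deliver the stated $cn$, and this is exactly where the paper's argument diverges from yours. The obstruction is quantitative. Friedland--Youssef gives weights $d_j=n/|\sigma|$ and a barycenter $u=\frac{1}{|\sigma|}\sum_{j\in\sigma}u_j$ with only $\|u\|_2\le 2\varepsilon/(3\sqrt n)$, so the first-moment term in your positive/negative split is $\bigl|\sum_{j}d_j\langle x,u_j\rangle\bigr|=n\,|\langle x,u\rangle|$, which can be as large as $\tfrac{2}{3}\varepsilon\sqrt n\,\|x\|_2$. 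Your quadratic inequality then reads $\beta\|x\|_2^2\le n+n\|x\|_2+\tfrac{2}{3}\varepsilon\sqrt n\,\|x\|_2^2$, which is vacuous for constant $\varepsilon$ and large $n$; taking $\varepsilon\sim n^{-1/2}$ restores it but forces $|\sigma|\gtrsim n^2$ half-spaces. The repair you allude to (adding at most $n+1$ further half-spaces by Carath\'eodory so that $\langle x,u\rangle$ is controlled one-sidedly on the intersection $Q$) only gives $n\langle x,u\rangle\ge -\tfrac{2}{3}\varepsilon n^{3/2}$, and the containment it produces is $Q\subset C(n+\varepsilon n^{3/2})B_2^n$, i.e.\ $Cn^{3/2}$ for constant $\varepsilon$. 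That is precisely the paper's \emph{diameter} theorem (and Proposition~\ref{pro: paso intermedio}), not the volume bound $cn$ claimed here.

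To reach $cn$ with only $O(n)$ half-spaces the paper does not bound the circumradius of $Q$ at all. It lifts the sparsified contact points to $\R^{n+1}$ via $v_j=\sqrt{n/(n+1)}\,(-u_j,1/\sqrt n)$, checks that the perturbation coming from the barycenter has operator norm at most $\varepsilon$ so the lifted vectors form a $(1\pm 2\varepsilon)$-approximate decomposition of $\Id_{n+1}$, and then applies the approximate Brascamp--Lieb inequality (Theorem~\ref{teo: b-l para desc aprox de la id}) to the exponential functions $f_j(t)=e^{-b_jt/k_j}\chi_{[0,\infty)}(t)$; integrating in the extra variable converts the product bound into a direct estimate on $\vol(Q)$, and Stirling gives $\vol(Q)^{1/n}\le d_n n$ with $d_n\to 1$. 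The Brascamp--Lieb step is the missing ingredient in your proposal: without it, the route you describe proves the diameter theorem, not the volume theorem. (A minor additional point: your detour through an ``approximate John decomposition for the given presentation'' is unnecessary, since every contact point of $\bigcap_i H_i$ with $S^{n-1}$ is automatically one of the normals $v_i$.)
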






 
\bigskip
B\'{a}r\'{a}ny, Katchalski and Pach also studied the question whether ``\emph{the intersection has large diameter}" is a sort of Helly-type
property for convex sets. They provided the following quantitative answer to this question:

\smallskip

{\sl Let $\{ C_i:i\in I\}$ be a family of closed convex sets in ${\mathbb R}^n$
such that ${\diam}\left (\bigcap_{i\in I}C_i\right )=1$. There exist $s\ls 2n$ and $i_1,\ldots ,i_s\in I$ such that
\begin{align*}{\diam}\left (C_{i_1}\cap \cdots \cap C_{i_s}\right )\ls (cn)^{n/2},\end{align*}
where $c>0$ is an absolute constant.}

\smallskip

In the same work the authors conjectured that the bound $(cn)^{n/2}$ should be polynomial in $n$. Leaving aside the requirement that $s\ls 2n$, Brazitikos in \cite{brazitikos2017quantitative} provided the following relaxed positive answer:
\begin{theorem}\label{th:diameter}

There exist an absolute constant $\alpha>1$ with the following property: 
for every $n \in \mathbb{N}$ and every  finite family  $\{C_i: i\in I\}$ of convex bodies in ${\mathbb R}^n$ such that $\bigcap_{i\in I}C_i$ has non-empty interior, then there is $z\in  \mathbb{R}^n$, $s\ls \alpha n$ and $i_1,\ldots i_s\in I$ such that

$$ 
z+C_{i_1} \cap \dots \cap C_{i_s} \subset c n^{3/2} \left( z + \bigcap_{i\in I}C_i \right),
$$

where $c>0$ are  absolute constant.

\bigskip
In particular, for every $n \in \mathbb{N}$ and every  finite family  $\{C_i: i\in I\}$ of convex bodies in ${\mathbb R}^n$ with ${\diam}\left (\bigcap_{i\in I}C_i\right )=1$,
there exist $s\ls \alpha n$ and $i_1,\ldots i_s\in I$ such that
\begin{align*}
{\diam}(C_{i_1}\cap\cdots \cap C_{i_s})\ls cn^{3/2},
\end{align*}
where $c>0$ are  absolute constant.

\end{theorem}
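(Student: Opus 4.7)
\emph{Reduction and setup.} As in the volume case, the compactness/approximation argument used before Theorem~\ref{teo: volume brazitikos} lets us replace each $C_i$ by a half-space $H_i = \{x \in \R^n : \langle x, v_i\rangle \ls 1\}$; any selection of half-spaces forces the corresponding selection of $C_i$'s since $C_i \subset H_i$. Choose $z$ to be the John point of $K := \bigcap_i C_i$, translate so $z = 0$, and apply a linear change of coordinates to put $K$ in John's position $B_2^n \subset K \subset nB_2^n$ (both operations preserve the affine inclusion we want). John's theorem then furnishes contact points $u_1, \dots, u_m \in \partial K \cap S^{n-1}$ with positive weights $c_j$ satisfying $\sum_j c_j u_j u_j^\top = I_n$ and $\sum_j c_j u_j = 0$; at each $u_j$ the outer normal to $K$ is $u_j$ itself, so $u_j = v_{i_j}$ for some $i_j \in I$.

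\emph{Sparsification.} Apply the Friedland--Youssef sparsification (substituting for the Srivastava-type result used in the original Brazitikos proof), after the standard lift to $\R^{n+1}$ via $\hat u_j = (u_j, 1/\sqrt n)$ under which $\sum_j c_j \hat u_j \hat u_j^\top = I_{n+1}$ encodes both the quadratic and the barycentric John conditions simultaneously. With absolute-constant accuracy $\varepsilon \in (0, 1/2)$, the theorem produces a subset $S \subset \{1,\dots,m\}$ of size $\ls \alpha n$ and positive weights $(d_j)_{j\in S}$ such that $\sum_{j\in S} d_j u_j u_j^\top$ is $\varepsilon$-close to $I_n$ in operator norm, $\sum_{j\in S} d_j \ls (1+\varepsilon)n$, and $\|\sum_{j\in S} d_j u_j\|$ is of order $\varepsilon\sqrt n$ (extracted from the off-diagonal block of the lifted sparsified identity).

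\emph{Diameter estimate, and the main obstacle.} Set $\tilde K := \bigcap_{j\in S} H_{u_j}$. Since $B_2^n \subset K$, it suffices to show $\tilde K \subset c n^{3/2} B_2^n$, equivalently $\tilde K^\circ = \conv\{u_j : j\in S\} \supset c^{-1} n^{-3/2} B_2^n$. Fix $x \in S^{n-1}$ and write $a_j := \langle u_j, x\rangle$. The sparsified decomposition yields $\sum_{j\in S} d_j a_j^2 \gr 1-\varepsilon$, and Cauchy--Schwarz forces $\sum_{j\in S} d_j |a_j| \ls (1+\varepsilon)\sqrt n$. Splitting the quadratic sum according to the sign of $a_j$: in the positive-dominated case the weighted-mass bound gives $\max_j a_j \gr c/\sqrt n$ at once; in the negative-dominated case one picks $j_0$ with $a_{j_0}$ sufficiently negative and uses the approximate barycenter equation to rewrite $-u_{j_0}$ as a positive-weighted combination of the remaining $u_j$, paying a factor $\sum_{j\ne j_0} d_j / d_{j_0}$ of order $n$, which produces $\max_j a_j \gr c'/n^{3/2}$. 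The principal technical difficulty is calibrating the accuracy $\varepsilon$: a constant $\varepsilon$ keeps $|S| = O(n)$ but admits a barycentric error of order $\varepsilon\sqrt n$ that threatens to swamp the Cauchy--Schwarz gain in the negative case, whereas the $\varepsilon \to 0$ regime trades $|S|$ for accuracy. The stronger quantitative guarantees of Friedland--Youssef over Srivastava are exactly what make this balance close at $\alpha n$ sets with diameter bound $c n^{3/2}$---this is the ``appropriate selection in the accuracy'' highlighted in the abstract.
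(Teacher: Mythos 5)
Your reduction and setup take a different but defensible route from the paper. You reduce to half-spaces and put $K=\bigcap_i C_i$ in John's position; the paper instead dualizes, writes $P^{\circ}=\conv\bigl(\bigcup_i C_i^{\circ}\bigr)$, puts $P^{\circ}$ in L\"owner position, and observes that each contact point of $P^{\circ}$ with $S^{n-1}$, being an extreme point of a convex hull, must lie in a single $C_{i}^{\circ}$. Your route is workable (after reducing to half-spaces, a John contact point $u_j$ of $K$ on $S^{n-1}$ forces equality in Cauchy--Schwarz with one of the defining normals, so $u_j=v_{i_j}$), but the dual route avoids the half-space reduction and the infinite-family technicality entirely. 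Also note that the paper applies Friedland--Youssef directly in $\R^n$ for the diameter theorem; the lift to $\R^{n+1}$ is used only in the volume proof.

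The real gap is in your treatment of the negative-dominated case. You propose to use the approximate barycenter identity $\sum_{j\in S} d_j u_j = \zeta$ to rewrite $-u_{j_0}$ as a positive combination of the remaining $u_j$'s. Writing $a_j=\langle u_j,x\rangle$, this gives
$\sum_{j\ne j_0}d_j a_j = -d_{j_0}a_{j_0}+\langle\zeta,x\rangle$; the signal $-d_{j_0}a_{j_0}$ is of order $1/\sqrt n$ (with uniform weights $d_j = n/|\sigma|$ so that $d_{j_0}\gtrsim 1$), while $|\langle\zeta,x\rangle|$ can be of order $\|\zeta\|\sim\varepsilon\sqrt n$. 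For $|S|=O(n)$ you must take $\varepsilon$ a constant, so the error term swamps the signal by a factor of $n$ and the argument does not close. You flag this tension yourself, but you attribute its resolution to the sharper $\varepsilon$-dependence in Friedland--Youssef versus Srivastava; that improvement only matters in the $\varepsilon\to0$ (continuous, $\delta>1$) regime of the paper's Proposition~\ref{pro: paso intermedio} and does nothing for $\delta=1$, where $\varepsilon$ is constant and Brazitikos's original Srivastava-based argument already succeeded.

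What actually closes the gap (both in Brazitikos's proof and in the paper's Proposition~\ref{pro: paso intermedio}) is a Carath\'eodory step you omit: set $w = \frac{3v}{2\sqrt{n}\,\varepsilon}$ where $v$ is the barycentric defect; Friedland--Youssef's bound $\|v\|_2\le \frac{2\varepsilon}{3\sqrt n}$ gives precisely $\|w\|_2\le \frac1n$, hence $w\in\conv\{u_j\}$ by \eqref{eq:ball-inside}. Carath\'eodory then yields an extra index set $\tau$ of size at most $n+1$ with $w=\sum_{i\in\tau}\rho_i u_i$, and one enlarges the selection to $X=\{u_j:j\in\sigma\cup\tau\}$ (still $O(n)$ many). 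The point is that one does not try to control $\max_j a_j$ against the barycentric error directly; instead one bounds the Minkowski functional $p_E(Ax)$, $E=\conv(X)$, by splitting $Ax$ into a non-negative combination of the $u_j$ (contributing $\le 2n$) plus a multiple of $v$, and since $v$ is a scalar multiple of $w\in E$, the error contributes at most $n\cdot\frac{2\varepsilon\sqrt n}{3}\,p_E(w)\le \frac{2}{3}\varepsilon n^{3/2}$. It is exactly this absorption of the defect vector into $E$ via the extra $n+1$ Carath\'eodory points that produces the $n^{3/2}$ bound with constant $\varepsilon$; without it, your negative-dominated case fails.
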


It should be mentioned that when symmetry is assumed better bounds in both theorems can be obtained (see \cite[Theorem 1.2]{brazitikos2017brascamp} and \cite[Theorem 1.2.]{brazitikos2017quantitative}).

\bigskip

Brazitikos' proofs of Theorem~\ref{teo: volume brazitikos} and Theorem~\ref{th:diameter} rely on the work  of Batson, Spielman and Srivastava  on  approximate  John's decompositions with few vectors \cite{batson2012twice}.
For Theorem~\ref{teo: volume brazitikos},  this is successfully combined with a new and very useful estimate for corresponding `approximate' Brascamp-Lieb-type inequality while, for Theorem~\ref{th:diameter}, the argument is based on a clever lemma of Barvinok from \cite{barvinok2014thrifty}. This lemma in turn, exploits again the theorem of Batson et. al. or to be precise, a more delicate version of Srivastava from \cite{srivastava2012contact}.

\bigskip

Of course if one is willing to further relax the number of intersections involved in the statements of Theorems \ref{teo: volume brazitikos} and \ref{th:diameter}, then one should expect to obtain better bounds/estimates.
The aim of this note is to present  the following \emph{continuous} quantitative Helly-type results (i.e., Helly-type results which are sensitive to the number intersections considered).

\begin{theorem}{(Continuous Helly-type theorem for the volume)}\label{teo: volume main th}
 Let $1 \leq \delta \leq 2$, there is an absolute constant $\alpha>1$ with the following property: for every $n \in \mathbb{N}$ and every family ${\mathcal H}=\{ H_i:i\in I\}$ of closed half-spaces in ${\mathbb R}^n$,
$$
H_i = \{x \in \R^n: \langle x, v_i\rangle \leq 1\},
$$
with $\vol(\bigcap_{i\in I}H_i)=1$, there  exists $s \leq \alpha n^{\delta}$ and $i_1, \dots, i_s \in I$ such that
\begin{align*}
\vol(H_{i_1}\cap \cdots \cap H_{i_s})^{1/n} \leq d_n n^{\frac{3}{2}-\frac{\delta}{2}},
\end{align*}
where $d_n \to 1$ as $n \to \infty$.
\end{theorem}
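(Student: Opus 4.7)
The plan is to follow Brazitikos' proof of Theorem \ref{teo: volume brazitikos} in \cite{brazitikos2017brascamp}, with two modifications signalled by the abstract: the Batson--Spielman--Srivastava sparsification \cite{batson2012twice} is replaced by the sharper recent sparsifier of Friedland--Youssef, and its accuracy parameter $\epsilon$ is tuned as an explicit function of $\delta$ so as to interpolate between the endpoints $\delta = 1$ (which recovers Theorem \ref{teo: volume brazitikos}) and $\delta = 2$.

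By a standard compactness argument we may assume $I$ is finite, so that $K := \bigcap_{i \in I} H_i$ is a convex body with non-empty interior containing the origin and $K^\circ = \conv\{v_i : i \in I\}$. An affine change of coordinates (which affects neither the number of selected half-spaces nor the ratio $\vol(H_{i_1}\cap\cdots\cap H_{i_s})/\vol(K)$) lets us further assume that the John ellipsoid of $K^\circ$ is the Euclidean ball $B_2^n$. John's theorem then yields contact points $u_1, \ldots, u_N \in S^{n-1} \cap \partial K^\circ$, each a vertex $v_{j_k}$, and positive weights $c_1, \ldots, c_N$ with
\[ \sum_{k=1}^N c_k\, u_k \otimes u_k = \Id_n, \qquad \sum_{k=1}^N c_k\, u_k = 0. \]
Set $\epsilon := n^{(1-\delta)/2} \in [n^{-1/2}, 1]$. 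Applying the Friedland--Youssef sparsifier with parameter $\epsilon$ produces a subset $J$ with $|J| \le \alpha\, n/\epsilon^{2} = \alpha\, n^{\delta}$ together with weights $(w_k)_{k \in J}$ such that $T := \sum_{k \in J} w_k\, u_k \otimes u_k$ has spectrum concentrated around $\{1\}$ in a quantitatively sharper way than in \cite{batson2012twice,srivastava2012contact}. Feeding the data $(\{u_k\}_{k\in J}, \{w_k\}_{k\in J})$ into the approximate Brascamp--Lieb inequality employed by Brazitikos in \cite{brazitikos2017brascamp} returns an upper bound on $\vol(\bigcap_{k \in J} H_{j_k})^{1/n}$ of the shape $C\, n\, \epsilon\, \psi(n, \epsilon)$, where $\psi(n, \epsilon) \to 1$ as $n \to \infty$ (the factor $\psi$ assembling the spectral corrections $(1\pm\epsilon)^{n/(n-1)}$ and the normalization of the Brascamp--Lieb constant). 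Substituting $\epsilon = n^{(1-\delta)/2}$ transforms this bound into $d_n\, n^{3/2 - \delta/2}$ with $d_n \to 1$, as claimed.

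The main obstacle is the precise bookkeeping through the approximate Brascamp--Lieb inequality. To drive the prefactor $d_n$ to $1$, the Friedland--Youssef control of the spectrum of $T$ must be \emph{genuinely} sharper than the windows $[(1-\epsilon)^{2}, (1+\epsilon)^{2}]$ coming from BSS-type sparsifiers; only then do the accumulated spectral corrections combine to $1 + o(1)$ uniformly over $\delta \in [1, 2]$, and in particular at the delicate endpoint $\delta = 1$ where $\epsilon = 1$. A secondary consistency check is that at $\delta = 1$ the argument reproduces (and asymptotically refines) the bound $cn$ of Theorem \ref{teo: volume brazitikos}, whereas at $\delta = 2$ it reaches the finest resolution $n^{1/2}$ permitted by the $O(n/\epsilon^{2})$ budget of the sparsifier.
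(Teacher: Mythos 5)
Your overall strategy is the paper's: normalize via John's theorem, sparsify the John decomposition with the Friedland--Youssef theorem, tune the accuracy as $\varepsilon\sim n^{(1-\delta)/2}$, and push the selected vectors through Brazitikos' approximate Brascamp--Lieb inequality. Two points in your sketch are, however, genuine gaps rather than omitted routine detail. First, the exact choice $\varepsilon=n^{(1-\delta)/2}$ fails at the endpoint $\delta=1$, where $\varepsilon=1$: the lower bound $(1-\varepsilon)\Id_n\preceq\cdots$ becomes vacuous and the ratio $\gamma=(1+2\varepsilon)/(1-2\varepsilon)$ entering the Brascamp--Lieb step is undefined. One must take $\varepsilon=c_0\,n^{(1-\delta)/2}$ with $c_0$ a small absolute constant (the paper uses $c_0=\tfrac14$, keeping $\varepsilon\le\tfrac14$ for all $\delta\in[1,2]$); this constant also matters for the claim $d_n\to1$, since at $\delta=1$ the factor $\bigl(\tfrac{1+2\varepsilon}{(1-2\varepsilon)^3}\bigr)^{1/2}$ is a constant strictly larger than $1$ that must be beaten by the $\sqrt{e/2\pi}$ coming from Stirling.

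Second, and more seriously, you assert that the sparsifier yields weights $w_k$ with $\sum_{k\in J}w_k\,u_k\otimes u_k\approx\Id_n$, ready to be fed into the approximate Brascamp--Lieb inequality. Theorem~\ref{teo: desc aprox id optima} does not give this: it controls the \emph{recentered} sum $\frac{n}{|\sigma|}\sum_{j\in\sigma}(u_j-u)\otimes(u_j-u)$, where $u$ is the (small but nonzero) barycenter of the selected contact points. Converting this into an approximate decomposition in terms of genuine unit vectors, as Theorem~\ref{teo: b-l para desc aprox de la id} requires, is the technical core of the proof: one lifts to $\R^{n+1}$ via $v_j=\sqrt{n/(n+1)}\,(-u_j,1/\sqrt{n})$, shows the resulting perturbation operator has norm at most $\varepsilon$ using precisely $\|u\|_2\le 2\varepsilon/(3\sqrt{n})$ (the linear-in-$\varepsilon$ bound that is the decisive advantage of Friedland--Youssef over Srivastava, and the place where your tuning of $\varepsilon$ actually pays off), and adjoins $n+1$ further half-spaces chosen by Carath\'eodory's theorem so as to absorb the term $\langle -u,x\rangle$ in the final integral estimate. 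None of this appears in your sketch, so the passage from the sparsifier to the bound of shape $Cn\varepsilon\,\psi(n,\varepsilon)$ is unsupported as written, even though that shape is indeed what the computation produces. A minor further slip: you normalize so that the John ellipsoid of $K^{\circ}$ is $B_2^n$, but the volume comparison at the end needs $B_2^n\subseteq\bigcap_{i\in I}H_i$, i.e.\ the intersection itself in John's position.
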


\begin{theorem}{(Continuous Helly-type theorem for the diameter)}\label{teo: diameter main th}
Let $1 \leq \delta \leq 2$, there is an absolute constant $\alpha>1$ with the following property: 
for every $n \in \mathbb{N}$ and every  finite family  $\{C_i: i\in I\}$ of convex bodies in ${\mathbb R}^n$ such that $\bigcap_{i\in I}C_i$ has non-empty interior, then there is $z\in  \mathbb{R}^n$, a number $s\ls \alpha n^{\delta}$ and indices $i_1,\ldots i_s\in I$ such that

$$ 
z+C_{i_1} \cap \dots C_{i_s} \subset c n^{2-\frac{\delta}{2}} \left( z + \bigcap_{i\in I}C_i \right),
$$

where $c>0$ are  absolute constant.

\bigskip

In particular, for every $n \in \mathbb{N}$ and every  finite family  $\{C_i: i\in I\}$ of convex bodies in ${\mathbb R}^n$ with ${\diam}\left (\bigcap_{i\in I}C_i\right )=1$,
there exist $s\ls \alpha n^{\delta}$ and $i_1,\ldots i_s\in I$ such that
\begin{align*}
{\diam}(C_{i_1}\cap\cdots\cap C_{i_s})\ls cn^{2-\frac{\delta}{2}},
\end{align*}
where $c>0$ are  absolute constant.
\end{theorem}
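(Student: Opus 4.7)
The plan is to follow Brazitikos' proof of Theorem~\ref{th:diameter} essentially line by line, making two changes: replace the sparsification of John's decompositions due to Srivastava used there by the sharper quantitative result of Friedland and Youssef, and tune the accuracy of the resulting approximate decomposition against the allowed cardinality $s\asymp n^{\delta}$. After an affine transformation we may assume that $K:=\bigcap_{i\in I}C_i$ is in John's position, so that $B_2^n$ is its maximal volume inscribed ellipsoid. Writing $K$ as the intersection of the half-spaces determined by the $C_i$ and keeping only the contact ones yields a John's decomposition $I_n=\sum_{j} c_j\, u_j\otimes u_j$ supported on contact normals $u_j\in\partial K\cap S^{n-1}$, each inherited from some $C_{i_j}$.

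Next I would invoke Friedland--Youssef's sparsification: for every integer $m$ with $\alpha n\leq m\leq \alpha n^{2}$, one extracts $m$ of the vectors $u_j$ and new non-negative weights such that the reweighted sum approximates $I_n$ in operator norm with error of order $\sqrt{n/m}$. Setting $m=\lceil \alpha n^{\delta}\rceil$ yields an approximation parameter $\varepsilon\lesssim n^{(1-\delta)/2}$, which interpolates between the regime used by Brazitikos ($\delta=1$, $\varepsilon$ constant) and the much finer regime ($\delta=2$, $\varepsilon\lesssim n^{-1/2}$).

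This approximate John decomposition is then fed into the lemma of Barvinok from \cite{barvinok2014thrifty}, which is the geometric engine of Brazitikos' argument for Theorem~\ref{th:diameter}. An inspection of that proof reveals that the enclosing factor produced between $z+C_{i_{j_1}}\cap\cdots\cap C_{i_{j_m}}$ and $z+K$ depends linearly on $\varepsilon$ and equals $n^{3/2}$ when $\varepsilon$ is of constant order; substituting $\varepsilon\lesssim n^{(1-\delta)/2}$ produces the claimed bound $n^{2-\delta/2}$. The diameter statement follows at once by evaluating the resulting inclusion along a diameter-realizing chord of the intersection.

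The main obstacle will lie precisely in this last step: one must verify carefully that the dependence of the enclosing constant in Barvinok's lemma on the sparsification parameter is indeed linear in $\varepsilon$, so that the gain of Friedland--Youssef over Srivastava translates exactly into the target exponent $2-\tfrac{\delta}{2}$. At the endpoints $\delta=1$ and $\delta=2$ this must reproduce Brazitikos' bound $n^{3/2}$ and yield the improved bound $n$, respectively, without any stray logarithmic factors or hidden dimensional dependence being absorbed into the absolute constant $c$ of the statement.
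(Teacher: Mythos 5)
Your plan is essentially the paper's: reduce to a ``thrifty approximation'' of a body in L\"owner/John position by a polytope with few contact points, replace Srivastava's sparsification inside Barvinok's lemma by the Friedland--Youssef version, tune $\varepsilon\asymp n^{(1-\delta)/2}$ against the cardinality budget $m\asymp n^{\delta}$, and dualize. The paper implements the ``inspection of Barvinok's lemma'' you call for as a standalone statement (Proposition~\ref{pro: paso intermedio}, the continuous analogue of \cite[Proposition 4.2]{brazitikos2017quantitative}): for $K$ in L\"owner position there is $X\subseteq K\cap S^{n-1}$ with $|X|\leq\alpha n^{\delta}$ and $B_2^n\subseteq C\varepsilon n^{3/2}\conv(X)=Cn^{2-\delta/2}\conv(X)$; it is applied to $P^{\circ}$ (your ``$K$ in John's position'' is the dual normalization, which is fine) and then polarity gives the inclusion in the theorem.

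The one thing you flag as ``the main obstacle'' but do not resolve is precisely the crux, and it is worth naming. The $\varepsilon$--linearity of the enclosing factor is \emph{not} automatic from having an approximation ratio $(1+\varepsilon)/(1-\varepsilon)$: Friedland--Youssef's Theorem~\ref{teo: desc aprox id optima} sparsifies into $\frac{n}{|\sigma|}\sum_{j\in\sigma}(u_j-u)\otimes(u_j-u)$ with a centering vector $u$, and the whole argument hinges on the bound $\|u\|_2\leq\frac{2\varepsilon}{3\sqrt n}$ (linear in $\varepsilon$). This is what lets one set $w=\frac{3u}{2\sqrt n\,\varepsilon}$ with $\|w\|_2\leq 1/n$, pull $w$ into $\conv\{u_j\}$ via Carath\'eodory (adding only $n+1$ extra contact points to $\sigma$), and bound the ``off-center'' contribution by $\tfrac{2}{3}\varepsilon n^{3/2}p_E(w)\leq c\,\varepsilon n^{3/2}$. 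Srivastava's version only gives $\|u\|_2\lesssim\sqrt{\varepsilon}/\sqrt n$, which would yield an enclosing factor $\sim n^{3/2}\sqrt{\varepsilon}$; with that, matching the target $n^{2-\delta/2}$ would force $\varepsilon\lesssim n^{1-\delta}$ and hence $|\sigma|\gtrsim n^{2\delta-1}$, overshooting the budget $\alpha n^{\delta}$ for every $\delta>1$. So the $\varepsilon$--versus--$\sqrt\varepsilon$ behavior of the centering vector is exactly the feature of Friedland--Youssef that your proposal needs to invoke explicitly; once you do, the rest of your outline goes through as in the paper.
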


The possible number of convex sets we allow to intersect on each statement (i.e., $\alpha n^\delta$)  will be referred as \emph{the admissible number of intersections.} 
\bigskip

Note that in both theorems we recover the previous mentioned results when the admissible number of intersections is linear in $n$ (i.e., when $\delta=1$). If the admissible number of intersections is $O(n^2)$ then the bounds are the known ones which follow by directly applying K. Ball classical theorem for the volume ratio \cite{ball1991volume} (see the end of Section~\ref{section: volume} for an argument).
Therefore, the dependencies in the exponent of both results obtained seem to be accurate.
Moreover, for a linear number of spaces (i.e., $\delta=1$) the constant that appears in Theorem~\ref{teo: volume main th} is much better than the one in \cite[Theorem 1.4.]{brazitikos2017brascamp}, since $d_n \to 1$  as $n$ goes to infinity.

\bigskip
To obtain Theorems \ref{teo: volume main th} and \ref{teo: diameter main th} we carefully follow Brazitikos's proofs of Theorems \ref{teo: volume brazitikos} and \ref{th:diameter} 
 but instead of using Batson et. al. or Srivastava's statment on the approximate John's decomposition we replace it with the following stronger result due to 
Friedland and Youssef (who exploited the recent solution of the Kadison-Singer problem \cite{marcus2015interlacing}, by showing that any $n \times m$ matrix $A$ can be approximated
in operator norm by a submatrix with a number of columns of
order the stable rank of $A$).

\bigskip

\begin{theorem}{\cite[Theorem 4.1]{friedland2019approximating}}\label{teo: desc aprox id optima}
Let $\{u_j, a_j\}_{1\leq j\le m}$ be a John's decomposition of the identity i.e, the identity operator $\Id_n$ is decomposed in the form
$\Id_n=\sum_{j=1}^ma_ju_j\otimes u_j$. Then for any $\varepsilon>0$ there exists a multi-set $\sigma \subset [m]$ (i.e., it allows repetitions of the elements) with $|\sigma| \le {n}/{c\varepsilon^2}$ so that 
$$
(1-\varepsilon) \Id_n \preceq \frac{n}{|\sigma|} \sum_{j\in\sigma} (u_j-u)\otimes (u_j-u)\preceq (1 + \varepsilon) \Id_n
$$
where $u=\frac{1}{|\sigma|} \sum_{i\in\sigma} u_j$ satisfies $\|u\|_2\le \frac{2\varepsilon}{3\sqrt{n}}$, and $c>0$ is an absolute constant.
\end{theorem}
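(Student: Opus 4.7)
The plan is to derive the statement from the Marcus--Spielman--Srivastava solution of the Kadison--Singer/Weaver conjecture, which provides a spectral sparsification tool that produces a subset of rank-one summands with cardinality proportional to the stable rank of the underlying matrix. Matricially, the John decomposition $\Id_n=\sum_{j=1}^m a_j u_j\otimes u_j$ is equivalent to saying that the $n\times m$ matrix $A$ with columns $\sqrt{a_j}\,u_j$ satisfies $AA^{\top}=\Id_n$, so $\|A\|=1$, $\|A\|_F^2=n$, and the stable rank of $A$ equals $n$; the target sparsity $|\sigma|\le n/(c\varepsilon^2)$ is then exactly ``stable rank over $\varepsilon^2$'', the prototypical output of MSS-style sparsification.

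First I would establish an uncentered spectral estimate: produce a multi-set $\sigma\subset[m]$ of cardinality $|\sigma|\lesssim n/\varepsilon^2$ such that
$$
(1-\varepsilon)\,\Id_n\preceq \frac{n}{|\sigma|}\sum_{j\in\sigma}u_j\otimes u_j\preceq(1+\varepsilon)\,\Id_n.
$$
This is essentially the Friedland--Youssef approximation of a matrix by a submatrix of its columns in operator norm, whose proof extracts $\sigma$ via a Weaver/MSS two-sided partition argument applied to random sampling with probabilities proportional to the weights $a_j$. Once this is in hand, the elementary identity
$$
\sum_{j\in\sigma}(u_j-u)\otimes(u_j-u)=\sum_{j\in\sigma}u_j\otimes u_j-|\sigma|\,u\otimes u
$$
reduces the centered statement to a bound of the form $\|u\|_2\lesssim \varepsilon/\sqrt n$: the rank-one correction then has operator norm $\tfrac{n}{|\sigma|}\cdot|\sigma|\,\|u\|_2^2=n\|u\|_2^2=O(\varepsilon^2)$, which is absorbable into the $\varepsilon$-envelope.

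The main obstacle is precisely the bound on $\|u\|_2$: an MSS-type column selection controls the second-moment operator $\sum u_j\otimes u_j$ but not the empirical first moment $\tfrac{1}{|\sigma|}\sum_{j\in\sigma}u_j$. To force the first moment to concentrate near its population mean (which vanishes in the usual John situation because $\sum_j a_j u_j=0$), I would not apply the sparsification to the vectors $u_j$ themselves but to a suitable lift, e.g.\ $\tilde u_j=(u_j,t)\in\R^{n+1}$ with a constant last coordinate chosen so that the augmented family still forms, after rescaling, a John decomposition of $\Id_{n+1}$; the isotropy condition enforced in the extra coordinate after sparsification then translates directly into a first-moment bound on $u$. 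Tuning this lift so that the sparsity $n/(c\varepsilon^2)$ is not degraded and so that the sharp constant in $\|u\|_2\le 2\varepsilon/(3\sqrt n)$ emerges is where the bulk of the quantitative work lives, and this is the step I expect to be most delicate.
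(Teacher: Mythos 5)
This theorem is not proved in the paper at all: it is imported verbatim as \cite[Theorem 4.1]{friedland2019approximating}, so there is no internal argument to compare your attempt against. Judged on its own, your outline correctly identifies the architecture of the Friedland--Youssef proof: encode the decomposition as an $n\times m$ matrix $A$ with columns $\sqrt{a_j}\,u_j$, so that $AA^{\top}=\Id_n$ and the stable rank is $n$; extract a multiset of columns of size $n/(c\varepsilon^2)$ by the Marcus--Spielman--Srivastava machinery; and handle the empirical barycenter separately. Your lifting device for the first moment is sound: with $\tilde u_j=\sqrt{n/(n+1)}\,(u_j,1/\sqrt n)$ and weights $a_j(n+1)/n$ one obtains a genuine decomposition of $\Id_{n+1}$ (this uses $\sum_j a_ju_j=0$, which you invoke and which is part of what ``John's decomposition'' means here; without it the barycenter bound is false), and the off-diagonal block of the sparsified $(n+1)$-dimensional operator equals $\sqrt n\,u$, so a two-sided $(1\pm\varepsilon)$ bound forces $\|u\|_2\le\varepsilon/\sqrt n$, the right order. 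The algebraic reduction of the centered estimate to the uncentered one plus a rank-one correction of norm $n\|u\|_2^2=O(\varepsilon^2)$ is also correct.

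The gap is that the entire content of the theorem sits in the step you black-box. The assertion that one can choose a \emph{multiset} $\sigma$ with $|\sigma|\le n/(c\varepsilon^2)$ and \emph{uniform} weights $n/|\sigma|$ achieving a two-sided $(1\pm\varepsilon)$ spectral approximation is precisely what distinguishes Friedland--Youssef from Srivastava's earlier theorem, as the paper itself emphasizes (explicit uniform weights, approximation ratio tending to $1$, and $\varepsilon$ rather than $\sqrt\varepsilon$ in the barycenter bound). The MSS/Weaver theorem gives a two-sided partition statement for families of rank-one matrices with small individual norms; converting that into a uniform-weight multiset selection of the prescribed cardinality requires the iterative construction of [FY19] -- preprocessing that splits heavy summands $a_ju_j\otimes u_j$ into copies of small spectral norm, repeated halving, and an accounting of how the approximation ratio degrades over the $O(\log(1/\varepsilon^2))$ rounds. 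None of that is carried out, and ``random sampling with probabilities proportional to $a_j$'' will not substitute for it: independent sampling needs $O(n\log n/\varepsilon^2)$ columns for a two-sided spectral guarantee, which destroys the stated cardinality. So the proposal is a correct reading of the strategy but not a proof; to make it one you would have to reproduce the sparsification argument itself rather than cite the result it underlies.
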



In the words of Friedland and Youssef, Thorem \ref{teo: desc aprox id optima} improves  Srivastava’s theorem \cite[Theorem
5]{srivastava2012contact} in three different ways. First the approximation ratio $(1 + \varepsilon)/(1 -
\varepsilon)$ can be made arbitrary close to 1 (while in Srivastava’s result one could
only get a $(4 + \varepsilon)$-approximation). Secondly, it gives an explicit
expression of the weights appearing in the approximation.
Finally, there is a big difference in the
dependence on $\varepsilon$ in the estimate of the norm of $u$: Srivastava obtains a
similar bound but with $\varepsilon$ replaced by 
$\sqrt{\varepsilon}$. 
This behaviour on the $\varepsilon$ parameter will be crucial for our purposes. With this at hand, we take $\varepsilon$ small but \emph{depending explicitly} on the admissible number of intersections (so, it will be linked to the dimension of the ambient space and the $\delta$ parameter).  This seems to be a new idea in these kind of problems.
All this allow us to obtain the estimates on our main results.
\bigskip

As we mentioned before, if all the bodies are symmetric then the known bounds in general are better. In that case, for an admissible number of intersections which is linear in $n$, the bounds are of order $\sqrt{n}$.
For the symmetric case we show in Section~\ref{section: final}  that relaxing the number of admissible intersections does not longer provide stronger estimates. Namely, the exponent $1/2$ cannot be improved. 

\bigskip

To obtain Theorem~\ref{teo: diameter main th} we prove the following proposition which provides  some sort of  ``thrifty'' approximation for a convex body. 
This is a continuous version of \cite[Proposition 4.2.]{brazitikos2017quantitative}, which we feel it may be interesting in its own right. 

\begin{proposition}\label{pro: paso intermedio}
Let $1 \leq \delta \leq2 $. If $K$ is a convex body whose minimal volume ellipsoid is the Euclidean unit ball (i.e., in Löwner position), then there is a subset $X\subseteq K\cap S^{n-1}$ of cardinality $card(X) \leq \alpha n^{\delta}$ and $$K\subseteq B_{2}^{n} \subseteq Cn^{2 - \frac{\delta}{2}} \conv(X),$$
 where $\alpha, C>0$ are absolute constant.
 \end{proposition}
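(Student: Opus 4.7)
The plan is to mirror Brazitikos' proof of \cite[Proposition 4.2]{brazitikos2017quantitative}, but to substitute Srivastava's result with the sharper Theorem~\ref{teo: desc aprox id optima} of Friedland and Youssef, and, crucially, to calibrate the accuracy parameter $\varepsilon$ to the target exponent $\delta$.

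First, since $K$ is in Löwner position, John's theorem produces contact points $\{u_j\}_{j=1}^m \subseteq K\cap S^{n-1}$ with positive weights $\{a_j\}$ realizing $\Id_n=\sum_j a_j\, u_j\otimes u_j$ together with the barycentric identity $\sum_j a_j u_j = 0$. I would then invoke Theorem~\ref{teo: desc aprox id optima} with
\[
\varepsilon := c_0\, n^{(1-\delta)/2},
\]
so that $|\sigma| \leq n/(c\varepsilon^2) \leq \alpha n^{\delta}$. This produces a multi-set $\sigma \subseteq [m]$ satisfying
\[
(1-\varepsilon)\Id_n \preceq \frac{n}{|\sigma|}\sum_{j\in\sigma}(u_j-u)\otimes(u_j-u)\preceq (1+\varepsilon)\Id_n,
\]
with $u=\frac{1}{|\sigma|}\sum_{j\in\sigma} u_j$ and the sharp bound $\|u\|_2 \leq \frac{2\varepsilon}{3\sqrt{n}}$. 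I set $X := \{u_j : j \in \sigma\} \subseteq K\cap S^{n-1}$; the cardinality bound $|X|\leq \alpha n^{\delta}$ is immediate.

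The heart of the argument is to extract from this approximate decomposition the inclusion $B_2^n \subseteq C n^{2-\delta/2}\,\conv(X)$. For this I would follow the Barvinok-style ``thrifty approximation'' technique used by Brazitikos: passing to the polar body, one bounds the Euclidean norm of any $y$ with $\langle y, u_j\rangle \leq 1$ for every $j\in\sigma$. Splitting $\sum_{j\in\sigma}\langle y, u_j-u\rangle^2$ into its positive and negative parts, using the orthogonality identity $\sum_{j\in\sigma}(u_j-u) = 0$ together with Cauchy--Schwarz, and plugging in the spectral sandwich, one derives an upper bound on $\|y\|_2$ depending explicitly on $n$ and $\varepsilon$; substituting $\varepsilon \asymp n^{(1-\delta)/2}$ then yields $\|y\|_2 \leq Cn^{2-\delta/2}$, equivalent to the desired inclusion.

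The main obstacle is to track the role of $\varepsilon$ throughout this Barvinok-style computation: the translation cost incurred in passing from the centered collection $\{u_j - u\}$ back to $\conv(X)$ is governed by $\|u\|_2$, and it is precisely here that Friedland--Youssef's sharp bound $\|u\|_2 = O(\varepsilon/\sqrt{n})$ (rather than Srivastava's weaker $O(\sqrt{\varepsilon}/\sqrt{n})$) becomes indispensable, allowing $\varepsilon$ to shrink with $n$ without the translation term overwhelming the estimates. This reproduces Brazitikos' bound $Cn^{3/2}$ at $\delta=1$ (where $\varepsilon$ is a constant), sharpens to $Cn$ at $\delta=2$ (where $\varepsilon \asymp n^{-1/2}$), and interpolates continuously between these regimes.
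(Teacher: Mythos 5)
Your setup coincides with the paper's: John decomposition for $K$ in L\"owner position, Friedland--Youssef with the calibration $\varepsilon\asymp n^{(1-\delta)/2}$ so that $|\sigma|\le n/(c\varepsilon^2)\le\alpha n^{\delta}$, and a Barvinok/Brazitikos-style Minkowski-functional estimate. However, there is a genuine gap in your choice $X:=\{u_j: j\in\sigma\}$. The paper (following Brazitikos) takes $X=\{u_j : j\in\sigma\cup\tau\}$, where $\tau$, $|\tau|\le n+1$, comes from Carath\'eodory's theorem applied to the rescaled barycenter $w:=\tfrac{3u}{2\sqrt{n}\varepsilon}$: since $\|w\|_2\le \tfrac1n$, inclusion \eqref{eq:ball-inside} puts $w$ in the convex hull of \emph{all} the contact points, so it is a convex combination of at most $n+1$ of them, and these extra points are adjoined to $X$. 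This addition is not cosmetic. In the estimate of $p_{\conv(X)}(Ax)$ for $A=\tfrac{n}{|\sigma|}\sum_{j\in\sigma}u_j\otimes u_j$, the ``translation cost'' is the term $n\,p_{\conv(X)}(u)$, and the bound $\|u\|_2\le\tfrac{2\varepsilon}{3\sqrt n}$ alone gives no control of $p_{\conv(X)}(u)$: one needs $w$ (a positive multiple of $u$) to actually belong to $\conv(X)$, which is exactly what the Carath\'eodory points guarantee, yielding $p_{\conv(X)}(u)=\tfrac{2\sqrt n\varepsilon}{3}p_{\conv(X)}(w)\le\tfrac{2\sqrt n\varepsilon}{3}$ and hence the term $\tfrac23\varepsilon n^{3/2}\asymp n^{2-\delta/2}$.

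Quantitatively, with only the points of $\sigma$ the argument cannot close for $\delta<2$. The spectral sandwich and the identity $\sum_{j\in\sigma}(u_j-u)=0$ give, for every $y\in S^{n-1}$, only $\max_{j\in\sigma}\langle u_j-u,y\rangle\ge \tfrac{1-\varepsilon}{4n}$; hence the support function of $\conv\{u_j:j\in\sigma\}$ in the direction $y=-u/\|u\|_2$ is bounded below only by $\tfrac{1-\varepsilon}{4n}-\|u\|_2$, and $\|u\|_2$ may genuinely be of order $\varepsilon/\sqrt n=c_0 n^{1-\delta/2}\cdot\tfrac1n$, which dominates $\tfrac1n$ whenever $\delta<2$. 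So $0$ need not lie in $\conv\{u_j:j\in\sigma\}$ at all, in which case no dilate of $\conv(X)$ contains $B_2^n$ and the claimed inclusion fails. The fix costs only $n+1$ additional points, so the cardinality bound $|\sigma\cup\tau|\le \tfrac{n^{\delta}}{c}+n+1\le\alpha n^{\delta}$ is unaffected; with this correction your argument matches the paper's.
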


\bigskip
To end with the introduction, we emphasize that there has been a lot of interest in Helly-type results recently. For example, the behavior of volumetric and diametric statements for large subfamilies is
described in \cite{de2017quantitative}. On the other hand, in  \cite{damasdi2021colorful,sarkar2019quantitative} newer exact quantitative Helly-type theorems for the
volume are given. Fractional and colorful versions for the diameter problem are treated in \cite{dillon2020m}.
We refer the reader to the references therein for a broader picture in this area and possible other ramifications.

\section{Notation and background}
We refer to the book of Artstein-Avidan, Giannopoulos and V. Milman \cite{artstein2015asymptotic} for basic facts from convexity and asymptotic geometry.

\smallskip
Recall that a convex body in ${\mathbb R}^n$ is a compact convex subset $K$ of ${\mathbb R}^n$ with non-empty interior. We say that the body $K$ is
symmetric if $x\in K$ implies that $-x\in K$. For any set $X$ we write $\conv(X)$ for its convex hull. For convex body $K$ we write $p_K$ for the Minkowski's functional of $K$, that is
$$p_K(x):= \inf\{\lambda > 0 \;:\; x \in \lambda K\}. $$
If $0\in {\mbox{int}}(K)$ then the polar body $K^{\circ }$ of $K$ is given by
\begin{align*}K^{\circ }:=\{ y\in {\mathbb R}^n: \langle x,y\rangle \ls 1
\;\hbox{for all}\; x\in K\}. \end{align*}
Volume is denoted by $\vol(\cdot)$ and  diameter by $\diam(\cdot).$
We consider in ${\mathbb R}^n$ the  Euclidean structure $\langle\cdot ,\cdot\rangle $ and denote by $\|\cdot \|_2$
the corresponding Euclidean norm. We write $B_2^n$ and $S^{n-1}$ for the corresponding Euclidean unit ball and  unit sphere respectively.

We say that a convex body $K$ is in John's position if the ellipsoid of maximal volume inscribed in $K$ is
the Euclidean unit ball $B_2^n$. John's classical theorem states that
$K$ is in John's position if and only if $B_2^n\subseteq K$ and there exist $u_1,\ldots ,u_m\in {\mbox{bd}}(K)\cap S^{n-1}$ (contact points of $K$ and $B_2^n$) and positive real numbers $a_1,\ldots ,a_m$ such that
\begin{align*}\sum_{j=1}^ma_ju_j=0\end{align*}
and the identity operator $\Id_n$ is decomposed in the form
\begin{equation}\label{eq:decomposition}\Id_n=\sum_{j=1}^ma_ju_j\otimes u_j, \end{equation}
where the rank-one operator $u_j\otimes u_j$ is simply $(u_j\otimes u_j)(y)=\langle u_j,y\rangle u_j$. 



If $u_1,\ldots ,u_m$ are unit vectors that satisfy John's decomposition \eqref{eq:decomposition} with some
positive weights $a_j$. Then, one has the useful equalities
\begin{align*}\sum_{j=1}^ma_j={\tr}(\Id_n)=n\quad \hbox{and}\quad
\sum_{j=1}^ma_j\langle u_j,z\rangle^2=1 \end{align*}
for all $z\in S^{n-1}$. Moreover,
\begin{equation}\label{eq:ball-inside}{\mbox{conv}}\{v_1,\ldots ,v_m\}\supseteq\frac{1}{n}B_2^n.\end{equation}


The body $K$ is in Löwner position if the minimal volume ellipsoid that contains it is the Euclidean ball $B_2^n$. In that case, we also have a decomposition of the identity as before.

Given two matrices $A,B \in \mathbb{R}^{n \times n}$ we write $A \preceq B$ whenever $B-A$ is positive semidefinite. 


The letters $c,c^{\prime }, C,C'$ etc. will always denote absolute positive constants which may change from line to line. 

\section{Continuous Helly-type result for the volume:  Theorem~\ref{teo: volume main th}.} \label{section: volume}

As mentioned above, the proof of Theorem~\ref{teo: volume main th} is based on an earlier one by Brazitikos \cite[Theorem 1.4.]{brazitikos2017brascamp}. 
However, we make two crucial modifications. The first one is that we use stronger results regarding
sparsification of John decompositions of the unity. The second difference is that we exploit how good the approximation of the identity is in terms on the admissible number of intersections. That is, we choose the $\varepsilon$ parameter (which measures the precision of the approximation) appropriately as a function of $n$ and $\delta$.
We include all the steps, even those that are an identical copy of the proof of \cite[Theorem 1.4.]{brazitikos2017brascamp}, for completeness. 

\bigskip
The following Brascamp-Lieb type inequality for approximate John's decomposition of the identity will be crucial.

\begin{theorem}{\cite[Theorem 5.4] {brazitikos2017brascamp}}\label{teo: b-l para desc aprox de la id}
 Let $\gamma>1$. Let $u_1,\cdots,u_s\in{S^{n-1}}$ and $a_1,\cdots,a_s>0$ satisfy $$\Id_n\preceq{A:=\sum_{j=1}^{s}{a_ju_j\otimes{u_j}}}\preceq\gamma{\Id_n}$$ and let $k_j=a_j\langle{A^{-1}u_j,u_j}\rangle>0,\;1\leq{j}\leq{s}$. If $f_1,\cdots,f_s:\R\longrightarrow\R^{+}$ integrable functions then $$\int\limits_{\R^n}{\prod\limits_{j=1}^{s}{f_j^{k_j}\left(\langle{x,u_j}\rangle\right)}}\,\mathrm{d}x\leq\gamma^{\frac{n}{2}}\prod\limits_{j=1}^{s}\left(\int\limits_{\R}{{f_j(t)}\,\mathrm{d}t}\right)^{k_j}.$$
 \end{theorem}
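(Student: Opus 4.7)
The plan is to reduce the statement to the classical (exact) geometric Brascamp--Lieb inequality of Ball via a linear change of coordinates that conjugates $A$ to the identity, and then to absorb the resulting error into the factor $\gamma^{n/2}$ by exploiting \emph{both} sides of the sandwich $\Id_n\preceq A\preceq\gamma\Id_n$.

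First, since $A\succeq \Id_n$ is positive definite, $A^{-1/2}$ is well defined. Setting $v_j:=A^{-1/2}u_j$, $w_j:=v_j/\|v_j\|_2$, and observing that $k_j=a_j\|v_j\|_2^{2}=a_j\langle A^{-1}u_j,u_j\rangle$, a direct computation gives
$$\sum_{j=1}^{s}a_j\,v_j\otimes v_j\;=\;A^{-1/2}AA^{-1/2}\;=\;\Id_n,\qquad\text{hence}\qquad\sum_{j=1}^{s}k_j\,w_j\otimes w_j\;=\;\Id_n,$$
which is an exact John decomposition by unit vectors $w_j$; taking traces gives $\sum_j k_j=n$. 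Next, perform the substitution $x=A^{-1/2}y$ (Jacobian $(\det A)^{-1/2}$); from $\langle x,u_j\rangle=\|v_j\|_2\langle y,w_j\rangle$ and the rescaling $g_j(t):=f_j(\|v_j\|_2 t)$ (so $\int_{\R}g_j=\|v_j\|_2^{-1}\int_{\R}f_j$), the classical geometric Brascamp--Lieb inequality applied to the exact decomposition yields
$$\int_{\R^n}\prod_{j=1}^{s}f_j^{k_j}(\langle x,u_j\rangle)\,dx\;\leq\;(\det A)^{-1/2}\prod_{j=1}^{s}\|v_j\|_2^{-k_j}\left(\int_{\R}f_j\right)^{k_j}.$$

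It remains to show $(\det A)^{-1/2}\prod_{j=1}^{s}\|v_j\|_2^{-k_j}\leq\gamma^{n/2}$. From $A\preceq\gamma\Id_n$ we get $A^{-1}\succeq\gamma^{-1}\Id_n$, so $\|v_j\|_2^{2}=\langle A^{-1}u_j,u_j\rangle\geq 1/\gamma$ and consequently $\prod_j\|v_j\|_2^{-k_j}\leq\gamma^{(\sum_j k_j)/2}=\gamma^{n/2}$; the opposite bound $A\succeq\Id_n$ gives $\det A\geq 1$, so $(\det A)^{-1/2}\leq 1$, and multiplying the two estimates closes the argument. The only genuine subtlety is precisely this last accounting: the prefactor $\gamma^{n/2}$ uses both halves of the hypothesis $\Id_n\preceq A\preceq\gamma\Id_n$ in complementary ways (the upper bound on $A$ supplies the factor $\gamma^{n/2}$ via the $\|v_j\|_2^{-k_j}$ terms, while the lower bound ensures that the Jacobian $(\det A)^{-1/2}$ does not degrade the estimate), so dropping either inequality immediately breaks the bound.
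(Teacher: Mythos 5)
Your proof is correct. The paper does not include a proof of this theorem but only cites it from Brazitikos; nonetheless your argument is the natural one and almost certainly coincides with the source: conjugate $A$ to the identity by $A^{-1/2}$, observe that $\sum_j k_j w_j\otimes w_j=\Id_n$ with $w_j=A^{-1/2}u_j/\|A^{-1/2}u_j\|_2$ unit vectors is a genuine geometric Brascamp--Lieb datum, apply the exact Ball inequality, and track the Jacobian and rescalings. Each step checks out: $\langle x,u_j\rangle=\|v_j\|_2\langle y,w_j\rangle$ under $x=A^{-1/2}y$, $\int g_j=\|v_j\|_2^{-1}\int f_j$, $\sum_j k_j=\tr(A^{-1}A)=n$, and the final accounting $A\preceq\gamma\Id_n\Rightarrow\|v_j\|_2^{-2}\leq\gamma$ together with $A\succeq\Id_n\Rightarrow(\det A)^{-1/2}\leq 1$ gives the prefactor $\gamma^{n/2}$. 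As a bonus, your argument actually produces the sharper intermediate constant $(\det A)^{-1/2}\prod_j\|v_j\|_2^{-k_j}$, which is then bounded by $\gamma^{n/2}$ using both halves of the hypothesis exactly as you explain.
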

 
 We now prove Theorem~\ref{teo: volume main th}.
 
 \begin{proof}{(of Theorem~\ref{teo: volume main th})}

Without loss of generality we  assume that $P:=\bigcap_{i\in I} H_i$ is in John's position.
Therefore there exist $J\subseteq{I}$ and vectors $(u_j)_{j\in{J}}$ which are contact points between $P$ and $S^{n-1}$ and $(a_j)_{j\in{J}}$ positive numbers, such that $$\Id_n=\sum_{j\in{J}}{a_ju_j\otimes{u_j}}\;\;\;\mbox{and}\;\;\;\sum_{j\in{J}}{a_ju_j}=0.$$

Using  Friedland and Youssef's approximate decomposition, Theorem~\ref{teo: desc aprox id optima}, we can find a multi-set $\sigma\subseteq{J}$ with $|\sigma| \leq \frac{n}{c\varepsilon^2}$ and a vector $u=\frac{-1}{\abs{\sigma}}\sum_{j \in \sigma}{u_j}$ such that $$(1-\varepsilon)\Id_n\preceq\frac{n}{\abs{\sigma}}\sum_{j \in \sigma}{(u_j+u)\otimes(u_j+u)}\preceq(1+\varepsilon)\Id_n,$$
also satisfying that 
  $\frac{n}{\abs{\sigma}}\sum_{j \in \sigma}{(u_j+u)}=0$ and $\Vert{u}\Vert_2\leq\frac{2\varepsilon}{3\sqrt{n}}$. 
 
  We consider the vector $w:=\frac{3u}{2\sqrt{n}\varepsilon}$.  Recall that $\frac{1}{n}B_2^n\subseteq\conv\{u_j,\;j\in{J}\}$, thus $\Vert w \Vert_2\leq\frac{1}{n}$ and hence $w\in\conv\{u_j,\;j\in{J}\}$.
By Carathéodory's Theorem, we know that there is $\tau\subseteq{J}$, with $\abs{\tau}\leq{n+1}$ and $\rho_i>0,\;i\in\tau$ such that 
 $$w=\sum_{i\in\tau}{\rho_iu_i}\;\;\;\mbox{and}\;\;\;\sum_{i\in\tau}{\rho_i}=1.$$
 
 Also notice that, since
   $u=\frac{-1}{\abs{\sigma}}\sum_{j \in \sigma}{u_j}$ and $\sum_{j \in \sigma}{\frac{1}{\abs{\sigma}}}=1$,  $-u\in\conv\{u_j,\;j \in \sigma\}$. Therefore, we have that the segment $\left[-u,\frac{3u}{2\sqrt{n}\varepsilon}\right]$ is contained in $\conv\{u_j,\;j\in{\sigma\cup\tau}\}.$ For $j \in \sigma$ we define $$v_j:=\sqrt{\frac{n}{n+1}}\left(-u_j,\frac{1}{\sqrt{n}}\right)\;\;\;\mbox{and}\;\;\;b_j=\frac{n+1}{\abs{\sigma}}.$$
 Set $v:=-\sqrt{\frac{n}{n+1}}\left(u,0\right)$. So, we have
 \begin{align*}
 \sum_{j \in \sigma}{b_j(v_j+v)\otimes(v_j+v)}&=\sum_{j \in \sigma}{\frac{n}{\abs{\sigma}}\left(-(u_j+u),\frac{1}{\sqrt{n}}\right)\otimes\left(-(u_j+u),\frac{1}{\sqrt{n}}\right)}\\
 &=
\left(\begin{matrix}
        \sum_{j \in \sigma}{\frac{n}{\abs{\sigma}}(u_j+u)\otimes(u_j+u)}&\frac{\sqrt{n}}{\abs{\sigma}}\sum_{j \in \sigma}{(u_j+u)}\\
        \frac{\sqrt{n}}{\abs{\sigma}}\sum_{j \in \sigma}{(u_j+u)^t}&\frac{n}{\abs{\sigma}}\sum_{j \in \sigma}{\frac{1}{n}}
      \end{matrix}\right)
\\
&=
\left(\begin{matrix}
        \sum_{j \in \sigma}{\frac{n}{\abs{\sigma}}(u_j+u)\otimes(u_j+u)}&0\\
        0&1
      \end{matrix}\right)
,
 \end{align*}
 which implies 
 \begin{equation}\label{eq: aprox de id n+1 para teo 6.4}
 (1-\varepsilon)\Id_{n+1}\preceq\sum_{j \in \sigma}{b_j(v_j+v)\otimes(v_j+v)}\preceq(1+\varepsilon)\Id_{n+1}.
 \end{equation}
 The sum $\sum_{j \in \sigma}{b_j(v_j+v)\otimes(v_j+v)}$ can be written as $$\sum_{j \in \sigma}{b_jv_j\otimes{v_j}}+v\otimes{\left(\sum_{j \in \sigma}{b_jv_j}\right)}+\left(\sum_{j \in \sigma}{b_jv_j}\right)\otimes{v}+(n+1)v\otimes{v},$$ and notice that since
 \begin{align*}
{}\sum_{j \in \sigma}{b_jv_j}&=\sum_{j \in \sigma}{\frac{n+1}{\abs{\sigma}}\sqrt{\frac{n}{n+1}}\left(-u_j,\frac{1}{\sqrt{n}}\right)}\\
&=\sqrt{\frac{n+1}{n}}\left(-\sum_{j \in \sigma}\frac{n}{\abs{\sigma}}{u_j},\frac{1}{\abs{\sigma}}\sum_{j \in \sigma}{\sqrt{n}}\right)\\
 &=\sqrt{\frac{n+1}{n}}\left(nu,\sqrt{n}\right),
 \end{align*}
we obtain that $$\left(\sum_{j \in \sigma}{b_jv_j}\right)\otimes{v}=\sqrt{\frac{n+1}{n}}\left(nu,\sqrt{n}\right)\otimes{\sqrt{\frac{n}{n+1}}(-u,0)}=
\left(\begin{matrix}
        -nu\otimes{u}&0\\
        -\sqrt{n}u^t&0
      \end{matrix}\right)
,$$
$$v\otimes{\left(\sum_{j \in \sigma}{b_jv_j}\right)}=
\left(\begin{matrix}
        -nu\otimes{u}&-\sqrt{n}u\\
        0&0
      \end{matrix}\right)
,$$
and $(n+1)v\otimes{v}=
\left(\begin{matrix}
        nu\otimes{u}&0\\
        0&0
      \end{matrix}\right)
$. 

Hence, we can write Equation \eqref{eq: aprox de id n+1 para teo 6.4} as $$(1-\varepsilon)\Id_{n+1}-T\preceq\sum_{j \in  \sigma}{b_jv_j\otimes{v_j}}\preceq(1+\varepsilon)\Id_{n+1}-T,$$
where $T=v\otimes{\left(\sum_{j \in \sigma}{b_jv_j}\right)}+\left(\sum_{j \in \sigma}{b_jv_j}\right)\otimes{v}+(n+1)v\otimes{v}=
\left(\begin{matrix}
        V&z\\
        z^t&0
      \end{matrix}\right)
,$ with $V=-nu\otimes{u}$ y $z=-\sqrt{n}u$. Now,  for $(x,t)\in{S^{n}}$ where $x \in \mathbb{R}^n$ and $t$ is a scalar, we have that 
\begin{align*}
{}\left\langle{T(x,t),(x,t)}\right\rangle&=\left\langle{\left(Vx+zt,\langle{z,x}\rangle\right),(x,t)}\right\rangle\\
&=\left\langle{\left(Vx,0\right),(x,t)}\right\rangle+\left\langle{(zt,\langle{z,x}\rangle),(x,t)}\right\rangle\\
&\leq\langle{Vx,x}\rangle+\Vert{(zt,\langle{z,x}\rangle)}\Vert_2\Vert{(t,x)}\Vert_2=\langle{Vx,x}\rangle+\left(\Vert{zt}\Vert_2^2+\langle{z,x}\rangle^2\right)^{\frac{1}{2}}\\
&\leq\norm{V}\Vert{x}\Vert_2^2+\left(\Vert{z}\Vert_2^2t^2+\Vert{z}\Vert_2^2\Vert{x}\Vert_2^2\right)^{\frac{1}{2}}\leq\norm{V}+\Vert{z}\Vert_2\left(t^2+\Vert{x}\Vert_2^2\right)^{\frac{1}{2}}\\
&=\norm{V}+\Vert{z}\Vert_2\Vert{(x,t)}\Vert_2=\norm{V}+\Vert{z}\Vert_2=n\Vert{u}\Vert_2^2+\sqrt{n}\Vert{u}\Vert_2\\
&\leq{n\frac{4\varepsilon^2}{9n}+\sqrt{n}\frac{2\varepsilon}{3\sqrt{n}}}=\frac{4\varepsilon^2}{9}+\frac{2\varepsilon}{3}\\
&\leq\varepsilon,
\end{align*}
for $\varepsilon$ small enough (say $\varepsilon\leq\frac{3}{4}$). So, $\norm{T}\leq\varepsilon$, and hence Equation \eqref{eq: aprox de id n+1 para teo 6.4} implies that $$(1-2\varepsilon)\Id_{n+1}\preceq{A:=\sum_{j \in \sigma}{b_jv_j\otimes{v_j}}}\preceq(1+2\varepsilon)\Id_{n+1},$$
or equivalently $$\Id_{n+1}\preceq\sum_{j \in \sigma}{\frac{b_j}{1-2\varepsilon}v_j\otimes{v_j}}\preceq\gamma{\Id_{n+1}},$$ with $\gamma=\frac{1+2\varepsilon}{1-2\varepsilon}$. Applying Theorem~\ref{teo: b-l para desc aprox de la id},  if $f_j:\R\rightarrow\R^{+}$ are measurable functions, then $$\int\limits_{\R^{n+1}}{\prod\limits_{j \in \sigma}{f_j^{k_j}\left(\langle{x,v_j}\rangle\right)}}\,\mathrm{d}x\leq\gamma^{\frac{n+1}{2}}\prod\limits_{j \in \sigma}\left(\int\limits_{\R}{{f_j(t)}\,\mathrm{d}t}\right)^{k_j},$$
where $$k_j=\frac{b_j}{1-2\varepsilon}\left\langle{\left(\frac{1}{1-2\varepsilon}A\right)^{-1}v_j,v_j}\right\rangle=b_j\langle{A^{-1}v_j,v_j}\rangle.$$ Since $A^{-1}\preceq\frac{1}{1-2\varepsilon}\Id_{n+1}$,  we have that $\frac{k_j}{b_j}\leq\frac{1}{1-2\varepsilon}$.
Note also that $\sum_{j \in \sigma} k_j = n+1$. Indeed,
 
\begin{align*} 
\sum_{j\in  \sigma} \kappa_j &= \sum_{j\in  \sigma} b_j \langle A^{-1}v_j,v_j \rangle =\sum_{j\in  \sigma} b_j \,{\tr}(v_j\otimes A^{-1}v_j)={\tr}\left (\sum_{j\in  \sigma} b_j(v_j\otimes A^{-1}v_j)\right )\\
 &={\tr}\left (\sum_{j\in  \sigma}b_j A^{-1}(v_j\otimes v_j)\right )= {\tr}\left (A^{-1}\Big (\sum_{j\in  \sigma} b_j(v_j\otimes v_j)\Big )\right ) \\
& ={\tr}(A^{-1}A)={\tr}(\Id_{n+1})=n+1.
\end{align*}

Now for $j \in \sigma$ we consider $f_j(t):=e^{\frac{-b_j}{k_j}t}\chi_{[0,\infty)}(t)$. So,
\begin{align*}
\int\limits_{\R^{n+1}}{\prod\limits_{j \in \sigma}{f_j^{k_j}\left(\langle{x,v_j}\rangle\right)}}\,\mathrm{d}x&\leq\gamma^{\frac{n+1}{2}}\prod\limits_{j \in \sigma}\left(\int\limits_{\R}{{f_j(t)}\,\mathrm{d}t}\right)^{k_j}\\
&=\gamma^{\frac{n+1}{2}}\prod\limits_{j \in \sigma}{\frac{k_j}{b_j}}^{k_j}\\
&\leq\gamma^{\frac{n+1}{2}}\frac{1}{(1-2\varepsilon)^{\sum_{j \in \sigma}{k_j}}}=\gamma^{\frac{n+1}{2}}\frac{1}{(1-2\varepsilon)^{n+1}}\\
&=\left(\frac{1+2\varepsilon}{(1-2\varepsilon)^3}\right)^{\frac{n+1}{2}}.
\end{align*}
Set $$Q=\bigcap_{i\in{\sigma\cup\tau}}{H_i}=\{x\in\R^n:\;\langle{x,u_j}\rangle<1,\;j\in{\sigma\cup\tau}\},$$
and let $y=(x,r)\in\R^{n+1}$. Assume that $r>0$ and $x\in{\frac{r}{\sqrt{n}}Q}$. Then we have that $\langle{x,u_j}\rangle<\frac{r}{\sqrt{n}}$ for every $j \in \sigma$, which implies that $\langle{y,v_j}\rangle>0$ for every $j \in \sigma$, and then $\prod\limits_{j \in \sigma}{f_j^{k_j}(\langle{y,v_j}\rangle)}>0$. We also have that 
\begin{align*}
\left\langle{\frac{1}{\abs{\sigma}}\sum_{j \in \sigma}{u_j},x}\right\rangle&=\langle{-u,x}\rangle=\frac{2\sqrt{n}\varepsilon}{3}\langle{-w,x}\rangle\\
&=\frac{2\sqrt{n}\varepsilon}{3}\left\langle{-\sum_{i\in\tau}{\rho_iu_i},x}\right\rangle\geq\frac{-2\sqrt{n}\varepsilon}{3}\left(\sum_{i\in\tau}{\rho_i}\right)\frac{r}{\sqrt{n}}\\
&=\frac{-2\varepsilon{r}}{3}.
\end{align*}
Thus, if $y=(x,r)\in\frac{r}{\sqrt{n}}Q\times(0,\infty)$, then 
\begin{align*}
\prod\limits_{j \in \sigma}{f_j^{k_j}(\langle{y,v_j}\rangle)}&=exp\left(-\sum_{j \in \sigma}{b_j\left(\frac{r}{\sqrt{n+1}}-\sqrt{\frac{n}{n+1}}\langle{x,u_j}\rangle\right)}\right)\\
&=exp\left(\frac{-r}{\sqrt{n+1}}\sum_{j \in \sigma}{b_j}\right)exp\left(\sqrt{n}\sqrt{n+1}\left\langle{x,\frac{1}{\abs{\sigma}}\sum_{j \in \sigma}{u_j}}\right\rangle\right)\\
&\geq{e^{-r\sqrt{n+1}}e^{-\sqrt{n}\sqrt{n+1}\frac{2}{3}r\varepsilon}}=e^{-r\sqrt{n+1}\left(1+\frac{2}{3}\varepsilon\sqrt{n}\right)}.
\end{align*}
Now, by Theorem~\ref{teo: b-l para desc aprox de la id},
\begin{align*}
\frac{\vol(Q)}{n^{\frac{n}{2}}}\int\limits_{0}^{\infty}{r^ne^{-r\sqrt{n+1}\left(1+\frac{2}{3}\varepsilon\sqrt{n}\right)}}\,\mathrm{d}r&=\int\limits_{0}^{\infty}\int\limits_{\frac{r}{\sqrt{n}}Q}{e^{-r\sqrt{n+1}\left(1+\frac{2}{3}\varepsilon\sqrt{n}\right)}}\,\mathrm{d}x\,\mathrm{d}r\\
&\leq\int\limits_{\R^{n+1}}{\prod\limits_{j \in \sigma}{f_j^{k_j}(\langle{y,v_j}\rangle)}}\,\mathrm{d}y\\
&\leq\left(\frac{1+2\varepsilon}{(1-2\varepsilon)^3}\right)^{\frac{n+1}{2}}.
\end{align*}
Using that $B_2^n\subseteq{P}$, and the fact that $$\int\limits_{0}^{\infty}{r^ne^{-r\sqrt{n+1}\left(1+\frac{2}{3}\varepsilon\sqrt{n}\right)}}\,\mathrm{d}r=\frac{n!}{(n+1)^{\frac{n+1}{2}}\left(1+\frac{2}{3}\varepsilon\sqrt{n}\right)^{n+1}},$$
we obtain, by taking $1+\varepsilon'=\frac{1+2\varepsilon}{(1-2\varepsilon)^3}$,  
\begin{align*}
{}\vol\big(\bigcap_{i\in{\sigma\cup\tau}}{H_i} \big)=\vol(Q)&\leq\frac{(1+\varepsilon')^{\frac{n+1}{2}}n^{\frac{n}{2}}(n+1)^{\frac{n+1}{2}}\left(1+\frac{2}{3}\varepsilon\sqrt{n}\right)^{n+1}}{n!}\frac{\vol(P)}{\vol(B_2^n)}\\
&=\frac{(1+\varepsilon')^{\frac{n+1}{2}}n^{\frac{n}{2}}(n+1)^{\frac{n+1}{2}}\left(1+\frac{2}{3}\varepsilon\sqrt{n}\right)^{n+1}}{n!}\frac{\Gamma\left(\frac{n}{2}+1\right)\vol(P)}{\pi^{\frac{n}{2}}}.
\end{align*}
By Stirling's formula we get, for a constant $C>0$, the inequality 
\begin{align*}
{}\vol({\bigcap_{i\in{\sigma\cup\tau}}{H_i}})&\leq{C\frac{(1+\varepsilon')^{\frac{n+1}{2}}n^{\frac{n}{2}}(n+1)^{\frac{n+1}{2}}\left(1+\frac{2}{3}\varepsilon\sqrt{n}\right)^{n+1}}{\pi^{\frac{n}{2}}\sqrt{2\pi{n}}\left(\frac{n}{e}\right)^n}\frac{n}{2}\sqrt{\frac{4\pi}{n}}\left(\frac{n}{2e}\right)^{\frac{n}{2}}\vol(P)}\\
&=C(1+\varepsilon')^{\frac{n+1}{2}}\frac{\left(1+\frac{2}{3}\varepsilon\sqrt{n}\right)^{n+1}n^nn(n+1)^{\frac{n+1}{2}}}{n^nn}\left(\frac{e}{2\pi}\right)^{\frac{n}{2}}\frac{1}{\sqrt{2}}\vol(P)\\
&=C(1+\varepsilon')^{\frac{n+1}{2}}\left(1+\frac{2}{3}\varepsilon\sqrt{n}\right)^{n+1}(n+1)^{\frac{n+1}{2}}\left(\frac{e}{2\pi}\right)^{\frac{n}{2}}\frac{1}{\sqrt{2}}\vol(P).
\end{align*}

We now define the accuracy of the approximation in terms of the admissible number of intersections.
Fix $\varepsilon:= \frac{1}{4} n^{(1-\delta)/2}$, using that $1+\varepsilon'=\frac{1+2\varepsilon}{(1-2\varepsilon)^3}$ we have  $$\left(1+\varepsilon{'}\right)\left(1+\frac{2}{3}\varepsilon\sqrt{n}\right)^2\frac{e}{2\pi}=(1+\varepsilon{'})\left(1+\frac{1}{6}n^{(2-\delta)/2}\right)^2\frac{e}{2\pi}<cn^{2-\delta}.$$  

Therefore,
\begin{align*}
{}\vol({\bigcap_{i\in{\sigma\cup\tau}}{H_i}})&\leq{C n^{\frac{n}{2}}\left(1+\frac{1}{n}\right)^{\frac{n}{2}}\sqrt{n+1}\;n^{(2-\delta)/2}\;n^{n(2-\delta)/2}\;\frac{\sqrt{2\pi}}{\sqrt{2e}}\vol(P)}\\
&\leq{C_1\sqrt{\frac{e(n+1)\pi}{e}}\;n^{\frac{n}{2}}\;n^{(2-\delta)/2}\;n^{n(2-\delta)/2}\vol(P)}\\
&=\left(\underbrace{C_1\sqrt{n+1}n^{(2-\delta)/2}\sqrt{\pi}}_{C_n}\right)n^{n(3-\delta)/2}\vol(P).
\end{align*}
We conclude that $$\vol({\bigcap_{i\in{\sigma\cup\tau}}{H_i}})\leq{C_n\;n^{n(3-\delta)/2}\;\vol(P)},$$
where the intersection is taken over at most  $\abs{\sigma\cup\tau}\leq\frac{n}{c\varepsilon^2}+n+1=\frac{n^{\delta}}{c}+n+1 \leq \alpha n^{\delta}$ half-spaces.
Since the constant $C_n$ is of order $n^{(3-\delta)/2}$, we have that $d_n:=C_n^{1/n} \to 1$ as $n \to \infty$. 

\end{proof}
It should be mentioned that the case $\delta=2$ is easier since it follows directly from very known results in the literature (which also give a better constant). Indeed, assume that $P$ is John's position and let $H_{i_1}, \dots, H_{i_s}$ be the (at most $O(n^2)$) hyperplanes determined by the contact points between $P$ and the Euclidean unit ball $B_2^n$ which provide a decomposition of the identity. Note that $\bigcap H_{i_s}$ is itself in John's position. Thus, by the estimate on the volume ratio of K. Ball \cite[Theorem 1']{ball1991volume}  we have
$$ \frac{\vol\left(\bigcap_{i=1}^s H_{i_s}\right)}{\vol(B_2^n)} \leq \frac{\vol\left(\Delta\right)}{\vol(B_2^n)} 
=   \frac{n^{\frac{n}{2}} (n+1)^{\frac{n+1}{2}} \Gamma(\frac{n}{2}+1)}{n! \pi^{\frac{n}{2}}},$$
where $\Delta$ stands for the regular simplex in John's position.
Thus, using Stirling formula and the fact that $P\supset B_2^n$,
$ \vol\left(\bigcap_{i=1}^s H_{i_s}\right) \leq c^n n^{n/2} \vol(P)$ where 
$c>0$ is an absolute constant strictly smaller than one.
We should mention that the Brascamp-Lieb inequality is of course hidden in the proof of Ball's result.

 \section{Continuous Helly-type theorem for the diameter}

We start with the proof of Proposition~\ref{pro: paso intermedio} which be need to prove Theorem~\ref{teo: diameter main th}. Although the argument is similar to \cite[Proposition 4.2.]{brazitikos2017quantitative}  we include all the steps for completeness. 
Again the improvement is to have a \emph{good} approximation of the identity and a nice control of its accuracy.

\begin{proof}[Proof of Proposition~\ref{pro: paso intermedio}]
By John's theorem there exist $v_{j}\in K\cap S^{n-1}$ and $a_{j}>0$, $j\in J$, such that 
 $$\Id_{n}= \sum_{j\in J} {a_{j} v_{j}\otimes v_{j}} \; \mbox{and} \; \sum_{j\in J} {a_{j}v_{j}}=0.$$
 Let $\varepsilon>0$ small enough to be fixed later. By Theorem~\ref{teo: desc aprox id optima} we can find a multiset $\sigma \subseteq J$ of cardinal $\vert \sigma \vert \leq \frac{n}{c \varepsilon^2}$ such that 
 $$(1-\varepsilon)\Id_{n}\preceq \frac{n}{\sigma} \sum_{j \in \sigma}{(v_{j}+v)\otimes (v_{j}+v)} \preceq (1+\varepsilon)\Id_{n},$$
 where $v=\frac{-1}{\vert\sigma\vert}\sum_{j \in \sigma}{v_{j}}$ satisfies $\Vert v \Vert_2 \leq \frac{2\varepsilon}{3\sqrt{n}}$.
 
 Then, the vector $w=\frac{3v}{2\sqrt{n}\varepsilon}$ satisfies $\Vert w \Vert_2 \leq \frac{1}{n}$ and therefore by Equation \eqref{eq:ball-inside}, it belongs to $\conv\{v_{j}:\;j\in J\}$.  By Carathéodory's theorem there exist $\tau\subseteq J$ with $\vert \tau \vert \leq n+1$ and $\rho_{i}>0$, $i\in \tau$ such that
$$w=\sum_{i\in\tau}{\rho_{i}v_{i}},\;\mbox{and}\;\sum_{i\in\tau}{\rho_{i}}=1.$$
Observe also that $-v=\frac{1}{\vert \sigma \vert}\sum_{j \in \sigma}{v_{j}}$ is in $\conv\{v_{j}:\; j \in \sigma\}$. Let 
$$T:= \frac{n}{\vert \sigma\vert}\sum_{j \in \sigma}{v_{j}\otimes v} + \frac{n}{\vert\sigma\vert}\sum_{j \in \sigma}{v\otimes v_{j}} + v\otimes v.$$

As in the proof of Theorem~\ref{teo: volume main th} it is easy to see that $\vert \langle{Tx,x}\rangle\vert\leq \varepsilon$ for every unit vector $x \in \R^n$ (provided that $\varepsilon$ is small enough).
Thus
$$(1-2\varepsilon)\Id_{n}\preceq(1-\varepsilon)\Id_{n}-T \preceq \frac{n}{\vert\sigma\vert}\sum_{j \in \sigma} {v_{j}\otimes v_{j}} \preceq (1+\varepsilon)\Id_{n}-T \preceq (1+2\varepsilon)\Id_{n}.$$

Define $X:=\{v_j : j \in \sigma \cup \tau \}$ and $E:=\conv(X).$
Let us  show that $B_{2}^{n} \subseteq c \varepsilon n^{3/2} E$.
Indeed, let $x\in S^{n-1}$; set $A:=\frac{n}{\vert \sigma \vert}\sum_{j \in \sigma}{ v_{j}\otimes v_{j}}$ and $\rho :=\min\{ \langle{x,v_{j}}\rangle:\; j \in \sigma\}$. Note that $\vert \rho \vert \leq 1$ and $\langle{x,v{j}}\rangle - \rho \leq 2$ for all $j \in \sigma$. 

If $\rho<0$ we have
\begin{align*}
{} p_{E}(Ax)&\leq p_{E}\left(Ax- \rho \frac{n}{\vert \sigma \vert}\sum_{j \in \sigma}{v_{j}}\right) + p_{E}\left(\rho \frac{n}{\vert \sigma \vert}\sum_{j \in \sigma}{v_{j}}\right)\\
&=p_{E}\left(\sum_{j \in \sigma}{ \frac{n}{\vert \sigma \vert} (\langle{x,v_{j}}\rangle - \rho) v_{j}} \right) + p_{E}(n \rho(-v))\\
&\leq \sum_{j \in \sigma}{ \frac{n}{\vert \sigma \vert} (\langle{x,v_{j}}\rangle - \rho) p_{E}(v_{j})} - n \rho p_{E}(v) \\
&\leq n \left( 2 + \frac{2\sqrt{n}\varepsilon}{3}p_{E}(w) \right)\\
&\leq c_{1} \varepsilon n^{3/2},
\end{align*}
where we are using that $w\in K$ and therefore $p_{E}(w)\leq 1$. 

On the other hand, if $\rho \geq 0$, then $\langle{x,v_{j}}\rangle\geq 0$ for all $j\in \sigma$, therefore
$$p_{E}(Ax)=p_{E}\left(\frac{n}{\vert\sigma\vert}\sum_{j \in \sigma}{\langle{x,v_{j}}\rangle v_{j}}\right)\leq \frac{n}{\vert \sigma\vert}\sum_{j \in \sigma}{\langle{x,v_{j}}\rangle p_{E}(v_{j}})\leq n.$$
This says that $$p_{A^{-1}(E)}(x)\leq c_{2} \varepsilon n^{3/2}$$
for all $x\in S^{n-1}$, where $c_{2}>0$ is an absolute constant. 

Therefore we have $$(1-2\varepsilon)B_{2}^{n} \subseteq A(B_{2}^{n}) \subseteq c_{2} (1+2\varepsilon) \varepsilon n^{3/2} E.$$
Finally, fix $\varepsilon:=\frac{1}{4} n^{\frac{1}{2}-\frac{\delta}{2}}$. Since $K$ is in  Löwner's position $$K\subseteq B_{2}^{n} \subseteq C_{2} \frac{1+2\varepsilon}{1-2\varepsilon} \varepsilon n^{3/2} \subseteq C n^{2- \frac{\delta}{2}} \conv(X),$$
with $\vert X \vert = \vert \sigma \cup \tau \vert \leq c n^{\delta} + n +1\leq \alpha n^{\delta}$.
 \end{proof}
 Let us now see the proof of the Theorem~\ref{teo: diameter main th}.
 \begin{proof}
 Consider $P:=\bigcap_{i\in I}{C_{i}}$.  Without loss of generality we can assume that $0\in int(P)$ and that the polar body $$P^{\circ}= \conv(\bigcap_{i\in I}{C_{i}^{\circ}})$$ is in Löwner's position.  Using Proposition~\ref{pro: paso intermedio} for the body $K=P^{\circ}$, we know there exists a set $X=\{v_{1},\cdots,v_{s}\}\subseteq P^{\circ}\cap S^{n-1}$ such that $\vert X \vert \leq \alpha n^{\delta}$ and $$P^{\circ} \subseteq C n^{2 - \frac{\delta}{2}} \conv(X),$$
 where $C>0$ is an absolute constant. Since $v_{1},\cdots,v_{s}$ are contact points between $P^{\circ}$ and $B_{2}^{n}$, then we have that $v_{j}\in \bigcap_{i\in I} {C_{i}^{\circ}}$ for all $j=1,\cdots,s$. This implies that there exist $s \leq \alpha n^{\delta}$ and  bodies $\{C_{i_{j}}\}$, such that $v_{j} \in C_{i_{j}}^{\circ}$ for all $j=1\cdots,s$. Then $\conv(X) \subseteq \conv(C_{i_1}^{\circ}\cup\cdots\cup C_{i_{s}}^{\circ})$ and hence $$P^{\circ}\subseteq C n^{2 - \frac{\delta}{2}} \conv(C_{i_1}^{\circ}\cup\cdots\cup C_{i_{s}}^{\circ}).$$
 This shows that $$C_{i_{1}}\cap\cdots\cap C_{i_{s}}\subseteq c n^{2 - \frac{\delta}{2}} P.$$ 
 In particular if $\diam(P)=1$ we have the following estimate for the diameter $$\diam(C_{i_{1}}\cap\cdots\cap C_{i_{s}}) \leq c n^{2- \frac{\delta}{2}}.$$ This concludes the proof.

 \end{proof}
 \section{Final comments: symmetry assumption} \label{section: final}
As mentioned in the introduction, it is known that if all the bodies are symmetric then the bounds for Helly-type results are better (see, for example, \cite[Theorem 1.2]{brazitikos2017brascamp} and \cite[Theorem 1.2.]{brazitikos2017quantitative}). In that case, for a linear number of  intersections, the bounds are of order $\sqrt{n}$.
One should be tempted to think that  increasing the admissible number of intersections would provide stronger estimates but, unfortunately, this is not the case: the \emph{exponent} $1/2$ cannot be improved, as the following example shows.

\begin{remark}
 There is a family ${\mathcal H}=\{ H_i:i\in I\}$ of closed symmetric strips in ${\mathbb R}^n$,
$$
H_i = \{x \in \R^n: \vert \langle x, v_i\rangle \vert \leq 1\},
$$
with $\vol(\bigcap_{i\in I}H_i)=1$ with the following property: for any $\delta \geq 1$ and any collection of indices $i_1, \dots, i_s \in I$ with $s=n^{\delta}$ we have
\begin{align*}
\vol(H_{i_1}\cap \cdots \cap H_{i_s})^{1/n} \geq c(\delta) \frac{\sqrt{n}}{\log(1+n)},
\end{align*}
where $c(\delta)>0$ is a constant which depends exclusively on $\delta$. 

\bigskip
In particular, if $s=n^{\delta}$ and $i_1, \dots, i_s \in I$ is such that we have the following inclusion
$$
H_{i_1}\cap \cdots \cap H_{i_s} \subset \beta \bigcap_{i\in I}H_i,
$$
then $$\beta \geq c(\delta) \frac{\sqrt{n}}{\log(1+n)}.$$
\end{remark}

\begin{proof} Let $I$ be a finite set of indices  and $w_i \in S^{n-1}$ for $i \in I$  such that
\begin{equation}B_2^n\subseteq \bigcap_{i \in I} \hat H_i\subseteq 2B_2^n,\end{equation}
where $\hat H_i$ is defined as the symmetric strip
\begin{equation*}\hat H_i=\{ x\in {\mathbb R}^n:|\langle x,w_i\rangle|\ls 1\}.
\end{equation*}
The existence of $\bigcap_{i \in I}$ can be derived, for example, using duality and a probabilistic argument as in the proof of \cite[Lemma 3.1.]{alonso2008isotropy} (e.g., picking $I=\{1, \dots, e^{cn}\}$ for $c>0$ large enough).

Denote by $Q:= \bigcap_{i \in i} \hat H_i\subseteq 2B_2^n$. 
Now if $s=n^{\delta}$, then for \emph{any} choice of indices $i_1,\ldots ,i_s\in I$ we can use the 
classical lower bound for the volume of the intersection of strips due
to B{\'a}r{\'a}ny-F{\"u}redi \cite{barany1988approximation} (see also known as Carl-Pajor \cite{carl1988gelfand} or Gluskin \cite{gluskin1989extremal}), which 
shows that
\begin{equation}\label{lower}|\hat H_{i_1}\cap\cdots \cap \hat H_{i_s}|^{1/n}\gr \frac{C(\delta)}{\sqrt{\log (1+ n)}}.
\end{equation}

Therefore, if we define for $i \in I$ the symmetric strip $H_i:=\frac{1}{\vol(Q)^{1/n}} \hat H_i$, since $\vol(Q)^{1/n} \sim \frac{1}{\sqrt{n}}$, then the family $\{ H_i : i \in I\}$  satisfies the assertion.
\end{proof}
 
\section*{Acknowledgements}
The second author wants to thank A. Giannopoulos 
who  encourage the writing of this manuscript.
\smallskip

This research was supported by  ANPCyT-PICT-2018-04250 and CONICET-PIP 11220130100329CO.
\bibliographystyle{alpha}

\end{document}